\providecommand{\U}[1]{\protect\rule{.1in}{.1in}}
\providecommand{\U}[1]{\protect\rule{.1in}{.1in}}
\newtheorem{theorem}{Theorem}
\newtheorem{definition}[theorem]{Definition}
\newtheorem{lemma}[theorem]{Lemma}
\newtheorem{remark}[theorem]{Remark}
\numberwithin{equation}{section}
\numberwithin{theorem}{section}
\newcommand{\interior}[1]{  {\kern0pt#1}^{\mathrm{o}}}
\begin{document}
\title[The $\infty$-eigenvalue problem with a sign-changing weight]{{The $\infty$-eigenvalue problem with a sign-changing weight}}
\author[U. Kaufmann, J. D. Rossi and J. Terra]{Uriel Kaufmann, Julio D. Rossi and Joana Terra }
\address[J. D. Rossi]{\\
\noindent Depto. de Matem\'{a}tica, FCEyN, Universidad de Buenos Aires, Ciudad
Universitaria, Pab 1 (1428), Buenos Aires, Argentina \\
\noindent email: \texttt{jrossi@dm.uba.ar}}

\address
[U. Kaufmann and J. Terra]{
\noindent FaMAF, Universidad Nacional de Cordoba, (5000), Cordoba, Argentina. \\
\noindent email: \texttt{kaufmann@mate.uncor.edu, jterra@famaf.unc.edu.ar}}

\keywords{infinity Laplacian, eigenvalues, sign-changing weight, viscosity solutions. \\
\indent2010 {Mathematics Subject Classification: 35P15, 35P30, 35J60. }}
\maketitle

\begin{abstract}
Let $\Omega\subset\mathbb{R}^{n}$ be a smooth bounded domain and $m\in
C(\overline{\Omega})$ be a sign-changing weight function. For $1<p<\infty$,
consider the eigenvalue problem%
\[
\left\{
\begin{array}
[c]{ll}%
-\Delta_{p}u=\lambda m(x)|u|^{p-2}u & \text{in }\Omega,\\
u=0 & \text{on }\partial\Omega,
\end{array}
\right.
\]
where $\Delta_{p}u$ is the usual $p$-Laplacian. Our purpose in this article is
to study the limit as $p\rightarrow\infty$ for the eigenvalues $\lambda
_{k,p}\left(  m\right)  $ of the aforementioned problem. In addition, we
describe the limit of some normalized associated eigenfunctions when $k=1$.
\end{abstract}

\section{Introduction}

Our main goal in this paper is to study the limit as $p\rightarrow\infty$ in
the eigenvalue problem for the $p$-Laplacian with a sign-changing weight.

Let $\Omega\subset\mathbb{R}^{n}$ be a smooth bounded domain, $1<p<\infty$,
and consider
\[
\Delta_{p}u:=\mbox{div}(|\nabla u|^{p-2}\nabla u)
\]
the usual $p$-Laplacian operator. Let $m\in C(\overline{\Omega})$ be a function (the weight) that
changes sign in $\Omega$. We set
\[
\Omega_{+}:=\overline{\{m>0\}},\quad\Omega_{-}:=\overline{\{m<0\}},\quad
\Omega_{0}:=\{m=0\}.
\]
Since we assume that $m$ changes sign we have that $\Omega_{+}\not =
\emptyset$ and $\Omega_{-}\not = \emptyset$. 

The eigenvalue problem associated with the $p$-Laplacian with a weight
function $m$ is given by
\begin{equation}
\left\{
\begin{array}
[c]{ll}
-\Delta_{p}u(x) =\lambda m(x)|u|^{p-2}u(x) & x \in \Omega,\\
u(x) =0 & x \in \partial\Omega.
\end{array}
\right.  \label{eq.p}%
\end{equation}
It is a well-known fact in the literature (cf. \cite{arias,cuesta, cuesta2} and
references therein) that the first (positive) eigenvalue can be characterized
variationally as follows:
\begin{equation}
\displaystyle\lambda_{1,p}:=\lambda_{1,p}(m)=\inf_{\mathcal{A}^{+}(m)}%
\int_{\Omega}|\nabla u|^{p}>0,\label{1er.p+}%
\end{equation}
where
$\mathcal{A}^{+}(m):=\left\{  u\in W_{0}^{1,p}(\Omega
):\int_{\Omega}m|u|^{p}=1\right\}  $.

In a similar way the first negative eigenvalue is given by
\[
\displaystyle\mu_{1,p}:=\mu_{1,p}(m)=\displaystyle-\lambda_{1,p}%
(-m)=-\inf_{\mathcal{A}^{-}(m)}\int_{\Omega}|\nabla u|^{p}<0,
\]
where
$
\mathcal{A}^{-}(m):=\left\{  u\in W_{0}^{1,p}(\Omega
):\int_{\Omega}m|u|^{p}=-1\right\}  $.

Regarding higher eigenvalues, it is also known that a sequence of positive
eigenvalues $\lambda_{k,p}\left(  m\right)  $ can be obtained by the
Ljusternik-Schnirelman theory. In fact, it holds that
\[
0<\lambda_{1,p}(m)<\lambda_{2,p}(m)\leq\lambda_{3,p}(m)\leq\text{ }
...\ \leq\lambda_{k,p}\left(  m\right)  \rightarrow\infty\quad\text{as
}k\rightarrow\infty,
\]
see e.g. \cite{arias, GP1} and references therein. Of course, the same ideas
also give the existence of a sequence of negative
eigenvalues $\mu_{k,p}\left(  m\right)  $,
\[
0>\mu_{1,p}(m)>\mu_{2,p}(m)\geq\mu_{3,p}(m)\geq\text{ }
...\ \geq\mu_{k,p}\left(  m\right)  \rightarrow - \infty\quad\text{as
}k\rightarrow\infty.
\]

Eigenvalue problems have received an increasing amount of attention along the
last decades by many authors, being studied mainly via variational methods. We
quote, among many others, \cite{brasco, Anane, BK2, BKJ, BF, cuesta, cuesta2,
FranLamb, FP, GP1, Ju-Li-05, JLM, KL, Lin, LL, Lindq90, NRSanAS, Sm, SW}. In some of these references
the limit as $p\to \infty$ of the eingenvalue problem associated to the classical case, $m\equiv1$, 
was considered. In particular, this limit as $p\to \infty$ was studied
in detail in \cite{JLM} (for the first eigenvalue)
and \cite{Ju-Li-05} (for higher eigenvalues), see also \cite{BK2} for an anisotropic version.
	In those papers it is proved that
	$$
		\lambda_{1,\infty} (1) :=
		\lim_{p\to +\infty}\left(\lambda_{1,p} (1)\right)^{1/p}=
 		\inf \left\{
 		\displaystyle \frac{ \displaystyle
 		\|\nabla v\|_{L^\infty(\Omega)}}
	    {\displaystyle \|v\|_{L^\infty (\Omega)} }\colon
	    v\in W^{1,\infty}_0 (\Omega), v\not\equiv0\right\}
		= \frac{1}{R},
	$$
	where $R$ is the largest possible radius of a ball contained
	in $\Omega$.
	In addition, they take the limit as
	$p\to \infty$ in the eigenfunctions of the $p$-Laplacian
	eigenvalue problems (see \cite{JLM}) and are viscosity solutions of
	the following eigenvalue problem
	(called the infinity eigenvalue problem in the literature and studied in \cite{Champion,Crasta,HSY,JLM,Yu})
	\begin{equation*}
		\begin{cases}
			\min \left\{|\nabla u|-\lambda_{1,\infty}(1) u,\,
			\Delta_{\infty} u \right\}=0 &\text{in }\Omega,\\
			u=0& \mbox{on } \partial \Omega.
		\end{cases}
	\end{equation*}
	The operator
	$\Delta_{\infty}$ that appears here
	is called the $\infty$-Laplacian and is given by
	$\Delta_\infty u := -\langle D^2u Du, Du \rangle.$

Our main first result for the weighted case gives a geometric characterization of the first $\infty
$-eigenvalue and establishes that it is associated to an eigenfunction that
satisfies a limiting variational problem, as well as a partial differential
equation, the later being satisfied in the viscosity sense. These results
generalize classical results for the $p$-Laplace eigenvalue problem without
the weight. It is interesting to emphasize that positive $\infty
$-eigenvalues \textit{only} take into account the geometry of the set where
the weight $m$ is positive.

\begin{theorem}
\label{teo1.intro}
The limit as $p\rightarrow\infty$ in the minimization problem
\eqref{1er.p+} is given by
\begin{equation}
\lambda_{1,\infty}(m):=\lim_{p\rightarrow\infty}\sqrt[p]{\lambda_{1,p}%
(m)}=\inf_{u\in W_{0}^{1,\infty}(\Omega)}\frac{\Vert\nabla u\Vert_{L^{\infty
}(\Omega)}}{\Vert u\Vert_{L^{\infty}(\Omega_{+})}}. \label{lam.inf}%
\end{equation}
Moreover, this value $\lambda_{1,\infty}(m)$ has a geometric
characterization:
\[
\lambda_{1,\infty}(m)=\frac{1}{R_{+}},\quad\text{where\quad}R_{+}:=\max
_{x\in\Omega_{+}}d(x,\partial\Omega),
\]
i.e., $R_{+}$ is the radius of the largest ball in $\Omega$ centered at a
point in $\Omega_{+}$.

Let $u_{p}$ be an eigenfunction associated with
$\lambda_{1,p}(m)$, that is, a minimizer to \eqref{1er.p+}, normalized by
$\int_{\Omega}m|u_{p}|^{p}=1$. Then, up to a subsequence,
\[
u_{p}\rightarrow u_{\infty},
\]
uniformly in $\overline{\Omega}$ and weakly in $W_{0}^{1,q}(\Omega)$ for every
$1<q<\infty$. Also, $u_{\infty}\in W_{0}^{1,\infty}(\Omega)$, it is a
minimizer of \eqref{lam.inf} and a viscosity solution to
\begin{equation}
\left\{
\begin{array}
[c]{ll}
-\Delta_{\infty}v=0 & \text{ in }\{mv=0\}^{o},\\
\min\{-\Delta_{\infty}v,|\nabla v|-\lambda_{1,\infty}v\}=0 & \text{ in
}\{mv>0\},\\
\max\{-\Delta_{\infty}v,-|\nabla v|-\lambda_{1,\infty}v\}=0 & \text{ in
}\{mv<0\}, \\
v=0 & \text{ on } \partial \Omega.
\end{array}
\right.  \label{ecuacion.infty.intro}
\end{equation}
\end{theorem}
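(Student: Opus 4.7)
The argument splits into four parts: the variational identification of $\lim_p\lambda_{1,p}(m)^{1/p}$, the compactness of the normalized eigenfunctions, the geometric identity $\lambda_{1,\infty}(m)=1/R_+$, and the passage to the viscosity limit. Throughout, denote by $\Lambda_\infty$ the infimum on the right-hand side of \eqref{lam.inf}.

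\emph{Upper bound for the limit.} For any competitor $u\in W^{1,\infty}_0(\Omega)$ with $\|u\|_{L^\infty(\Omega_+)}>0$, I plug $u$ into the Rayleigh quotient from \eqref{1er.p+}, obtaining $\lambda_{1,p}(m)^{1/p}\le \|\nabla u\|_{L^p(\Omega)}/(\int_\Omega m|u|^p)^{1/p}$ whenever the denominator is positive. H\"older on the bounded domain $\Omega$ gives $\|\nabla u\|_{L^p}\to \|\nabla u\|_{L^\infty(\Omega)}$; the nontrivial claim is that $(\int_\Omega m|u|^p)^{1/p}\to \|u\|_{L^\infty(\Omega_+)}$. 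The upper bound is direct, using $m\le \|m\|_\infty$ on $\{m>0\}$. For the lower bound, exploit $\Omega_+=\overline{\{m>0\}}$ together with continuity of $u$ and $m$ to select, for each $\varepsilon>0$, a ball on which $m\ge c>0$ and $|u|\ge \|u\|_{L^\infty(\Omega_+)}-\varepsilon$. Taking $p\to\infty$ and then $\varepsilon\to 0$ closes the gap, and infimizing over $u$ yields $\limsup_p\lambda_{1,p}(m)^{1/p}\le \Lambda_\infty$.

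\emph{Compactness and reverse inequality.} With the normalization $\int m|u_p|^p=1$ we have $\|\nabla u_p\|_{L^p}^p = \lambda_{1,p}(m)$, and H\"older gives $\|\nabla u_p\|_{L^q}\le \lambda_{1,p}(m)^{1/p}|\Omega|^{1/q-1/p}$ for any $q<p$, uniformly bounded in $p$ by the previous step. A diagonal extraction and the compact embedding $W^{1,q}_0(\Omega)\hookrightarrow C(\overline\Omega)$ for $q>n$ produce a subsequence with $u_p\to u_\infty$ uniformly on $\overline\Omega$ and $u_p\rightharpoonup u_\infty$ weakly in $W^{1,q}_0$ for every finite $q$; weak lower semicontinuity followed by $q\to\infty$ gives $u_\infty\in W^{1,\infty}_0(\Omega)$ with $\|\nabla u_\infty\|_{L^\infty}\le \liminf_p \lambda_{1,p}(m)^{1/p}$. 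Since $\int_{\{m<0\}}m|u_p|^p\le 0$, the normalization entails $1\le \|m\|_\infty|\Omega|\,\|u_p\|_{L^\infty(\Omega_+)}^p$; extracting $p$-th roots and using uniform convergence gives $\|u_\infty\|_{L^\infty(\Omega_+)}\ge 1$. Chaining,
\[
\Lambda_\infty \le \frac{\|\nabla u_\infty\|_{L^\infty}}{\|u_\infty\|_{L^\infty(\Omega_+)}} \le \|\nabla u_\infty\|_{L^\infty} \le \liminf_p \lambda_{1,p}(m)^{1/p} \le \limsup_p \lambda_{1,p}(m)^{1/p} \le \Lambda_\infty,
\]
so every inequality is an equality, the limit exists and equals $\Lambda_\infty$, and $u_\infty$ realizes the infimum in \eqref{lam.inf}. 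The geometric identity then follows by standard arguments: for $\Lambda_\infty\ge 1/R_+$, any admissible $u$ attains $\|u\|_{L^\infty(\Omega_+)}$ at some $x_0\in\Omega_+$, and since $u=0$ on $\partial\Omega$, $|u(x_0)|\le \|\nabla u\|_{L^\infty}d(x_0,\partial\Omega)\le R_+\|\nabla u\|_{L^\infty}$ along the shortest path; for $\Lambda_\infty\le 1/R_+$, the cone $u(x):=\max(0,R_+-|x-x_0|)$ based at an $x_0\in\Omega_+$ realizing $R_+$ belongs to $W^{1,\infty}_0(\Omega)$ and has $\|\nabla u\|_{L^\infty}=1$, $\|u\|_{L^\infty(\Omega_+)}\ge u(x_0)=R_+$.

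\emph{Viscosity equation and main obstacle.} The last and hardest step is to show $u_\infty$ solves \eqref{ecuacion.infty.intro} in the viscosity sense. Each $u_p$ is a viscosity solution of $-\Delta_p u_p=\lambda_{1,p}(m)m|u_p|^{p-2}u_p$. I would touch $u_\infty$ strictly from above at $x_0$ by a smooth test $\phi$, use uniform convergence to produce nearby touching points $x_p\to x_0$ for $u_p$, expand
\[
-\Delta_p\phi = -|\nabla\phi|^{p-2}\Delta\phi + (p-2)|\nabla\phi|^{p-4}\Delta_\infty\phi,
\]
divide by $(p-2)|\nabla\phi|^{p-4}$ and send $p\to\infty$. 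The term $\lambda_{1,p}(m)\,m|\phi|^{p-2}\phi/\bigl((p-2)|\nabla\phi|^{p-4}\bigr)$ either vanishes or diverges according to whether $\Lambda_\infty|\phi(x_0)|<|\nabla\phi(x_0)|$ or the reverse; combining these alternatives with the signs of $m(x_0)$ and $\phi(x_0)$ yields exactly the min/max condition in each of the three open regions $\{mv=0\}^o$, $\{mv>0\}$, $\{mv<0\}$. The analogous touching from below provides the supersolution inequalities, while $u_\infty=0$ on $\partial\Omega$ is inherited from uniform convergence. The main obstacle is the careful bookkeeping of signs across the three regions and the treatment of degenerate touches ($|\nabla\phi(x_0)|=0$, $\phi(x_0)=0$, or $x_0$ on an interface), which must be handled via the standard perturbation arguments from viscosity theory in the spirit of \cite{JLM}.
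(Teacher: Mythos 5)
Your overall architecture coincides with the paper's: test functions concentrated where $m>0$ for the upper bound, H\"older plus the compact embedding $W^{1,q}_0(\Omega)\hookrightarrow C(\overline\Omega)$ for convergence of $u_p$, the normalization argument yielding $\Vert u_\infty\Vert_{L^\infty(\Omega_+)}\ge 1$, the closing chain of inequalities, and the touching-function passage to the limit PDE. There is, however, one genuine error in your upper-bound step. For an \emph{arbitrary} competitor $u\in W^{1,\infty}_0(\Omega)$ the claim $\bigl(\int_\Omega m|u|^p\bigr)^{1/p}\to\Vert u\Vert_{L^\infty(\Omega_+)}$ is false: writing $\int_\Omega m|u|^p=\int_{\{m>0\}}m|u|^p-\int_{\{m<0\}}|m|\,|u|^p$, the $p$-th roots of the two terms converge to $\Vert u\Vert_{L^\infty(\Omega_+)}$ and $\Vert u\Vert_{L^\infty(\Omega_-)}$ respectively, so whenever $\Vert u\Vert_{L^\infty(\Omega_-)}\ge\Vert u\Vert_{L^\infty(\Omega_+)}$ the difference can be negative for all large $p$ (so the Rayleigh quotient is not even admissible) or converge to something strictly below $\Vert u\Vert_{L^\infty(\Omega_+)}$. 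Your $\varepsilon$-ball argument only bounds the positive contribution from below and says nothing about the negative one. The paper avoids this by testing exclusively with the cone $w=d(\cdot,\partial B_r(c))$, $c\in\{m>0\}$, and showing that the contribution of $B_r(c)\setminus B_\delta(c)$, where $w\le r-\delta$, is exponentially dominated by that of $B_{\delta/2}(c)\subset\{m>0\}$, where $w\ge r-\delta/2$; the ratio $\bigl((r-\delta)/(r-\delta/2)\bigr)^p\to 0$ is exactly what kills the sign-changing part. Since you do exhibit the cone as a near-minimizer of $\Lambda_\infty$ at the end, your proof is repairable by running the upper bound only over such cones (which is all the final chain of inequalities requires), but the step as written is wrong.

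Two smaller points. First, in the closing chain you need $\Lambda_\infty\ge 1/R_+$ to conclude $\limsup_p\lambda_{1,p}^{1/p}\le\Lambda_\infty$ implies $\limsup_p\lambda_{1,p}^{1/p}\le 1/R_+$ (or vice versa); you do supply both directions of the geometric identity, and that part is fine. Second, the viscosity statement is only a plan: you correctly identify the mechanism (the term $\lambda_{1,p}m|u_p|^{p-2}u_p/\bigl((p-2)|\nabla\phi|^{p-4}\bigr)$ written as $\frac{1}{p-2}(\cdots)^p$ either vanishes or blows up according to the sign of $|\nabla\phi(x_0)|-\lambda_{1,\infty}|\phi(x_0)|$), but the sub/supersolution case analysis across the three regions $\{mv=0\}^{o}$, $\{mv>0\}$, $\{mv<0\}$ is a substantial half of the theorem and cannot be dismissed as standard bookkeeping; at minimum the region $\{mv>0\}$ should be carried out in full, as the paper does, with the remaining regions obtained by sign reversal.
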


Concerning higher eigenvalues, which will be properly defined in section \ref{sect-higher}, we have been able to establish an upper bound.
This bound is analogous to the one obtained in \cite{Ju-Li-05} for the unweighted case,
but again the balls need to be centered in the set $\Omega_{+}$.
We have the following result:

\begin{theorem}
\label{teo2.intro} Let $\lambda_{k,p}$ be the $k$-th eigenvalue of the
$p$-Laplacian problem, as defined in \eqref{higher-eigen}. Then we have that
\[
\lim_{p\rightarrow\infty}\left(  \lambda_{k,p}\right)  ^{1/p}\leq\frac
{1}{R_{k,+}},
\]
where
$$
R_{k,+}:=\sup_{r>0}\{\text{\small{there are} }k\text{ \small{disjoint balls of radius}
}r\text{ \small{in} }\Omega\text{ \small{centered at} }x_{1},\dots,x_{k}\in\Omega_{+}\}.
$$
\end{theorem}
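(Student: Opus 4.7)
The plan is to construct an explicit symmetric compact set of Krasnoselskii genus at least $k$ inside the constraint manifold $\mathcal{A}^{+}(m)$ and then invoke the Ljusternik--Schnirelman characterisation of $\lambda_{k,p}$ recalled in \eqref{higher-eigen}. Fix $0<r<R_{k,+}$ and choose pairwise disjoint balls $B_{i}:=B(x_{i},r)\subset\Omega$, $i=1,\dots,k$, with centres $x_{i}\in\Omega_{+}$. Since $\Omega_{+}=\overline{\{m>0\}}$, a small perturbation of the centres (after slightly shrinking $r$ if necessary) lets us assume $m(x_{i})>0$ for every $i$. On each $B_{i}$ I consider the cone test function $\phi_{i}(x):=(r-|x-x_{i}|)_{+}$, supported in $\overline{B_{i}}$, with $\|\phi_{i}\|_{L^{\infty}}=r$ and $|\nabla\phi_{i}|\equiv 1$ on $B_{i}$, so that $\int_{\Omega}|\nabla\phi_{i}|^{p}=\omega_{n}r^{n}$.

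Set $u_{\alpha}:=\sum_{i=1}^{k}\alpha_{i}\phi_{i}$ for $\alpha=(\alpha_{1},\dots,\alpha_{k})\in\R^{k}$. The disjoint-support decomposition yields
\[
\int_{\Omega}|\nabla u_{\alpha}|^{p}=\omega_{n}r^{n}\sum_{i=1}^{k}|\alpha_{i}|^{p}, \qquad \int_{\Omega}m|u_{\alpha}|^{p}=\sum_{i=1}^{k}|\alpha_{i}|^{p}\,a_{i,p}, \quad a_{i,p}:=\int_{B_{i}}m\,\phi_{i}^{p}\,dx.
\]
The key analytic input is that, thanks to $m(x_{i})>0$ and the concentration of $\phi_{i}^{p}$ near $x_{i}$ as $p\to\infty$, one has $a_{i,p}>0$ for all large $p$ and $a_{i,p}^{1/p}\to r$: the upper bound $a_{i,p}^{1/p}\leq (\|m\|_{\infty}\omega_{n}r^{n})^{1/p}\,r\to r$ is immediate, and the matching lower bound follows from $a_{i,p}\geq c\,\omega_{n}\rho^{n}(r-\rho)^{p}$ for any $\rho>0$ small enough that $m\geq c>0$ on $B(x_{i},\rho)$, after extracting $p$-th roots and sending $\rho\to 0^{+}$. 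Normalising $v_{\alpha}:=u_{\alpha}\bigl(\sum_{j}|\alpha_{j}|^{p}a_{j,p}\bigr)^{-1/p}$ on the Euclidean unit sphere $S^{k-1}\subset\R^{k}$ gives a continuous odd map into $\mathcal{A}^{+}(m)$; its image $A$ is a compact symmetric set of Krasnoselskii genus $k$, hence admissible in the Ljusternik--Schnirelman infimum.

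Using the elementary inequality $\sum b_{i}/\sum c_{i}\leq \max_{i}b_{i}/c_{i}$ for positive reals,
\[
\lambda_{k,p}\leq \sup_{v\in A}\int_{\Omega}|\nabla v|^{p}\leq \frac{\omega_{n}r^{n}}{\min_{1\leq i\leq k}a_{i,p}}.
\]
Taking $p$-th roots and letting $p\to\infty$, $(\omega_{n}r^{n})^{1/p}\to 1$ and $\min_{i}a_{i,p}^{1/p}\to r$, whence $\limsup_{p\to\infty}\lambda_{k,p}^{1/p}\leq 1/r$; since $r<R_{k,+}$ is arbitrary, the claimed bound follows. The principal obstacle is keeping all $a_{i,p}$ simultaneously bounded away from $0$ as $p$ varies, which is exactly what forces the centres to sit in $\Omega_{+}$ and isolates $R_{k,+}$ as the correct geometric invariant (in place of the unrestricted inradius). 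Secondary technical points are matching the normalisation $\int m|v|^{p}=1$ used above with the precise Ljusternik--Schnirelman formulation in \eqref{higher-eigen}, and checking continuity of $\alpha\mapsto v_{\alpha}$ on $S^{k-1}$.
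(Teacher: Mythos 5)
Your construction is essentially the one in the paper: cones of slope one on $k$ disjoint balls of radius $r<R_{k,+}$ centred in $\Omega_{+}$, the $k$-dimensional span normalised to a symmetric compact set of genus $k$, the disjoint-support splitting of the Rayleigh quotient, and $p$-th roots. (The paper writes out only $k=2$ and normalises by $\Vert v\Vert_{\infty}=1$ rather than mapping $S^{k-1}$ into $\mathcal{A}^{+}(m)$; these are cosmetic differences, and your perturbation of the centres to guarantee $m(x_{i})>0$ is a point the paper in fact glosses over.)

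There is, however, one step that fails as written: the claimed lower bound $a_{i,p}\geq c\,\omega_{n}\rho^{n}(r-\rho)^{p}$. The ball $B_{i}=B(x_{i},r)$ need not be contained in $\{m>0\}$ --- only its centre lies in $\Omega_{+}$ --- so $m$ may be negative on $B_{i}\setminus B(x_{i},\rho)$, and the contribution of that annulus to $a_{i,p}$ can be as negative as $-\Vert m\Vert_{\infty}\omega_{n}r^{n}(r-\rho)^{p}$, which is of the \emph{same} exponential order $(r-\rho)^{p}$ as the positive term you retain; dropping it is not justified, and the honest one-radius bound $(r-\rho)^{p}\bigl(c\,\omega_{n}\rho^{n}-\Vert m\Vert_{\infty}\omega_{n}r^{n}\bigr)$ is typically negative. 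The repair is the two-radius splitting the paper uses: bound the positive contribution from below on the smaller ball $B(x_{i},\rho/2)$, where $\phi_{i}\geq r-\rho/2$, and the possibly negative contribution on $B_{i}\setminus B(x_{i},\rho)$, where $\phi_{i}\leq r-\rho$. Since $\bigl((r-\rho)/(r-\rho/2)\bigr)^{p}\to 0$, this gives $a_{i,p}\geq (r-\rho/2)^{p}\bigl(c\,\omega_{n}(\rho/2)^{n}-o(1)\bigr)>0$ for large $p$, hence $\liminf_{p\to\infty}a_{i,p}^{1/p}\geq r-\rho/2$, which suffices after letting $\rho\to 0^{+}$. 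With this correction your argument is complete and coincides with the paper's proof.
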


For the case of the
second $\infty$-eigenvalue, $k=2$, we are also able to completely determine
$\lambda_{2,\infty}$ and give a geometric characterization similar to the classical one by \cite{Ju-Li-05}, once
again depending only on the set where $m$ is positive.

\begin{theorem}
\label{teo3.intro} Let $\lambda_{2,p}$ be the second eigenvalue of the
$p$-Laplacian problem, as defined in \eqref{higher-eigen}. We have that
\begin{equation}
\lambda_{2,\infty}:=\lim_{p\rightarrow\infty}\left(  \lambda_{2,p}\right)
^{1/p}=\frac{1}{R_{2,+}},\label{ineqk2}%
\end{equation}
where
\[
R_{2,+}:=\sup_{r>0}\{\text{there are two disjoint balls }B_{r}(x_{1}),B_{r}%
(x_{2})\subset\Omega\text{ with }x_{1},x_{2}\in\Omega_{+}\}.
\]
\end{theorem}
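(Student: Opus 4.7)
By Theorem~\ref{teo2.intro} we already have $\limsup_{p\to\infty}\lambda_{2,p}^{1/p}\leq 1/R_{2,+}$, so only the matching lower bound $\liminf_{p\to\infty}\lambda_{2,p}^{1/p}\geq 1/R_{2,+}$ remains. My plan is to adapt to the weighted setting the nodal-domain argument used in \cite{Ju-Li-05} for the unweighted case.

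Fix an eigenfunction $u_p$ associated with $\lambda_{2,p}$. Because $\lambda_{2,p}>\lambda_{1,p}$ and only the first positive eigenvalue admits a sign-constant eigenfunction, $u_p$ must change sign; pick a connected component $N_p^{+}$ of $\{u_p>0\}$ and a component $N_p^{-}$ of $\{u_p<0\}$. On each $N_p^{\pm}$ the function $|u_p|$ is a positive Dirichlet eigenfunction of the weighted $p$-Laplacian with eigenvalue $\lambda_{2,p}$, and by uniqueness of the first positive eigenvalue with a positive eigenfunction,
\[
\lambda_{2,p}=\lambda_{1,p}(m,N_p^{+})=\lambda_{1,p}(m,N_p^{-});
\]
multiplying the equation by $u_p$ and integrating yields $\int_{N_p^{\pm}}m|u_p|^p>0$, so each $N_p^{\pm}$ meets $\Omega_{+}$. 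I then set
\[
v_p^{\pm}:=\Bigl(\int_{N_p^{\pm}}m|u_p|^p\Bigr)^{-1/p}\,|u_p|\,\chi_{N_p^{\pm}}\geq 0,
\]
extended by zero outside $N_p^{\pm}$; these functions lie in $W_0^{1,p}(\Omega)$, have disjoint supports, and satisfy $\int_\Omega m(v_p^{\pm})^p=1$ together with $\int_\Omega|\nabla v_p^{\pm}|^p=\lambda_{2,p}$.

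The compactness argument in the proof of Theorem~\ref{teo1.intro} applies to each family $\{v_p^{\pm}\}$ (the quantity $\|\nabla v_p^{\pm}\|_{L^p}^{1/p}=\lambda_{2,p}^{1/p}$ is bounded by Theorem~\ref{teo2.intro}, and Morrey's inequality provides equicontinuity for $p>n$), so up to a subsequence $v_p^{\pm}\to v_\infty^{\pm}\in W_0^{1,\infty}(\Omega)$ uniformly on $\overline{\Omega}$ and weakly in $W_0^{1,q}(\Omega)$ for every $q<\infty$. Passing the identity $(\int_\Omega m(v_p^{\pm})^p)^{1/p}=1$ to the limit, as in the proof of \eqref{lam.inf}, gives $\|v_\infty^{\pm}\|_{L^\infty(\Omega_{+})}\geq 1$, so both limits are nonzero; moreover $v_p^{+}v_p^{-}\equiv 0$ passes to $v_\infty^{+}v_\infty^{-}\equiv 0$, so the open sets $D^{\pm}:=\{v_\infty^{\pm}>0\}$ are nonempty, disjoint subsets of $\Omega$, each meeting $\Omega_{+}$. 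Stability of viscosity solutions under local uniform convergence then shows that $v_\infty^{\pm}$ is a positive viscosity solution on $D^{\pm}$ of the $\infty$-eigenvalue problem \eqref{ecuacion.infty.intro} with eigenvalue $\lambda_{2,\infty}$ and zero Dirichlet data on $\partial D^{\pm}$; since only the first positive $\infty$-eigenvalue admits a positive eigenfunction, Theorem~\ref{teo1.intro} applied to the fixed domain $D^{\pm}$ forces $\lambda_{2,\infty}=\lambda_{1,\infty}(m,D^{\pm})=1/R_{+}(D^{\pm})$. Consequently each $D^{\pm}$ contains a ball of radius $1/\lambda_{2,\infty}$ centered at a point of $\Omega_{+}$; since $D^{+}$ and $D^{-}$ are disjoint, these two balls yield, by the definition of $R_{2,+}$, the inequality $R_{2,+}\geq 1/\lambda_{2,\infty}$, i.e.\ $\lambda_{2,\infty}\geq 1/R_{2,+}$.

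The step I expect to require the most care is the identification of $v_\infty^{\pm}$ as a true \emph{first} $\infty$-eigenfunction on $D^{\pm}$, rather than just a positive viscosity solution on some a priori larger set. The nodal domains $N_p^{\pm}$ need not converge to $D^{\pm}$ in Hausdorff distance (the former may be substantially larger), so the limit equation on $D^{\pm}$ has to be extracted purely from local uniform convergence of $v_p^{\pm}$ on compact subsets of $\{v_\infty^{\pm}>0\}$. I would complement this stability step with the uniqueness of positive $\infty$-eigenfunctions (known in the unweighted case and inherited here via the characterization in Theorem~\ref{teo1.intro}) to conclude $\lambda_{2,\infty}=\lambda_{1,\infty}(m,D^{\pm})$, after which the geometric identification $\lambda_{1,\infty}(m,D^{\pm})=1/R_{+}(D^{\pm})$ together with the disjointness of $D^{+}$ and $D^{-}$ close the argument.
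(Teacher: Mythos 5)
Your proof is correct in outline and rests on the same key external ingredient as the paper's, but the decomposition is organized differently, and the difference is worth recording. The paper first passes to the limit $u_{2,p}\to u_{2,\infty}$ and only then splits into $u_{2,\infty}^{\pm}$, working on nodal domains of the \emph{limit}; it must then argue separately that both parts survive in the limit (the normalization only gives $\max\{\Vert u_{2,\infty}^{+}\Vert,\Vert u_{2,\infty}^{-}\Vert\}\geq 1$ directly, and the paper upgrades this by invoking that only the first eigenvalue admits a sign-constant eigenfunction). You instead decompose at finite $p$, restricting $u_{p}$ to one nodal domain of each sign and renormalizing each restriction by its own weighted $L^{p}$ norm; since $\int_{\Omega}|\nabla v_{p}^{\pm}|^{p}=\lambda_{2,p}$ and $\int_{\Omega}m(v_{p}^{\pm})^{p}=1$, both limits $v_{\infty}^{\pm}$ automatically satisfy $\Vert v_{\infty}^{\pm}\Vert_{L^{\infty}(\Omega_{+})}\geq1$, so the nontriviality of both pieces comes for free --- a genuine simplification of that step. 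The point you flag as delicate (passing the equation to the limit on $D^{\pm}$ although $N_{p}^{\pm}$ varies with $p$) does go through: uniform convergence forces a fixed neighbourhood of any $x_{0}\in D^{\pm}$ to lie inside $N_{p}^{\pm}$ for all large $p$, so the finite-$p$ equation is available exactly where it is tested. Where both proofs coincide is the rigidity step: a positive viscosity solution of the $\infty$-eigenvalue problem on a domain, vanishing on its boundary, forces the parameter to equal $1/R_{+}$ of that domain. Be aware that this does \emph{not} follow from Theorem \ref{teo1.intro}, which only describes limits of first $p$-eigenfunctions, not arbitrary positive viscosity solutions; the paper imports it from Theorem 3.2 of \cite{cuesta} and Theorem 8.1 of \cite{Ju-Li-05}, and you should cite it in the same way rather than ``inherit'' it from Theorem \ref{teo1.intro}. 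Two minor repairs: apply that rigidity on the connected component of $D^{\pm}$ containing the point of $\Omega_{+}$ where $v_{\infty}^{\pm}\geq1$ (the open sets $D^{\pm}$ need not be connected), and note that the whole argument runs along a subsequence on which $\lambda_{2,p}^{1/p}$ converges, which combined with the upper bound of Theorem \ref{teo2.intro} identifies the full limit.
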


\begin{remark}{\rm
Although in the above theorems we focus on the first positive eigenvalue, we can
obtain analogous results for the first negative eigenvalue. It holds that
\[
\mu_{1,\infty}(m):=\lim_{p\rightarrow\infty}-\sqrt[p]{-\mu_{1,p}(m)}%
=-\inf_{u\in W_{0}^{1,\infty}(\Omega)}\frac{\Vert\nabla u\Vert_{L^{\infty
}(\Omega)}}{\Vert u\Vert_{L^{\infty}(\Omega_{-})}}.
\]
In this case, we have
\[
\mu_{1,\infty}(m)=-\inf_{u\in W_{0}^{1,\infty}(\Omega)}\frac{\Vert\nabla
u\Vert_{L^{\infty}(\Omega)}}{\Vert u\Vert_{L^{\infty}(\Omega_{-})}}=-\frac
{1}{R_{-}}\text{,}%
\]
where $R_{-}$ the radius of the largest ball included in $\Omega$ centered at
a point in $\Omega_{-}$, i.e., $R_{-}:=\max_{x\in\Omega_{-}}d(x,\partial
\Omega)$. Also, the limit of the associated eigenfunctions satisfies an eigenvalue problem
analogous to \eqref{ecuacion.infty.intro}.

A similar result concerning higher eigenvalues also holds for the negative ones.}
\end{remark}

Finally, let us observe that with the same ideas we can analyze a slightly different operator. Namely, we now
add a term $C(x)|u|^{p-2}u$ to the $p$-Laplacian and obtain the following eigenvalue problem:
\begin{equation}
\left\{
\begin{array}
[c]{ll}
-\Delta_{p}u(x)+C(x)|u(x)|^{p-2}u(x)=\lambda m(x)|u|^{p-2}u(x) & x\in\Omega,\\
u(x)=0 & x\in\partial\Omega,
\end{array}
\right.  \label{eq.p.otherintro}
\end{equation}
where $C$ is continuous and positive in $\overline{\Omega}$ and 
$m$ changes sign and satisfies the previous conditions.
For this problem, it is known (see \cite{cuesta2}) that there exists a principal eigenvalue given by
\begin{equation}
\lambda_{1,p}(C,m)=\min_{\int_{\Omega}m|u|^{p}=1}\int_{\Omega}|\nabla
u|^{p}+C|u|^{p}. \label{eigen.Cintro}
\end{equation}
Concerning the limit as $p\to \infty$ we have the following result.

\begin{theorem}
\label{teo.C.intro}
The limit as $p\rightarrow\infty$ in the minimization problem
\eqref{eigen.Cintro} is given by
\begin{equation} \label{lim.C.intro}
\lambda_{1,\infty}(C,m):=\lim_{p\rightarrow\infty}\sqrt[p]{\lambda_{1,p}
(C,m)} = \max\left\{ \frac{1}{R_{+}},1 \right\},
\end{equation}
where, as before, $R_{+}:=\max
_{x\in\Omega_{+}}d(x,\partial\Omega)$.
\end{theorem}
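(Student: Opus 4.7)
The plan is to establish \eqref{lim.C.intro} by matching upper and lower bounds for $\lambda_{1,p}(C,m)^{1/p}$, both tending to $\max(1/R_{+},1)$.

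\textbf{Lower bound.} I would split the contribution of the Dirichlet energy and of the potential separately. First, since $C \ge 0$ in $\overline{\Omega}$, comparing \eqref{eigen.Cintro} with \eqref{1er.p+} yields
\[
\lambda_{1,p}(C,m) \;\ge\; \lambda_{1,p}(m),
\]
so Theorem \ref{teo1.intro} gives $\liminf_p \lambda_{1,p}(C,m)^{1/p} \ge 1/R_{+}$. Second, for the minimizer $u_p$, the constraint $\int_\Omega m|u_p|^p=1$ forces $\int_\Omega |u_p|^p \ge 1/\|m\|_{L^\infty(\Omega)}$, and setting $c_0:=\min_{\overline{\Omega}} C>0$ (which is positive since $C$ is continuous and positive on the compact set $\overline{\Omega}$) gives
\[
\lambda_{1,p}(C,m) \;\ge\; \int_\Omega C|u_p|^p \;\ge\; c_0/\|m\|_{L^\infty(\Omega)}.
\]
Taking $p$-th roots gives $\liminf_p \lambda_{1,p}(C,m)^{1/p} \ge 1$. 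Combining the two, $\liminf_p \lambda_{1,p}(C,m)^{1/p} \ge \max(1/R_{+},1)$.

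\textbf{Upper bound.} Here I would use a cone-like test function inside the Rayleigh quotient. Fix $x_0\in\Omega_{+}$ with $d(x_0,\partial\Omega)=R_{+}$; since $x_0\in\overline{\{m>0\}}$, pick $y_n\to x_0$ with $m(y_n)>0$, and set $\rho_n:=d(y_n,\partial\Omega)\to R_{+}$. Define
\[
v_n(x) \;:=\; \bigl(\rho_n - |x-y_n|\bigr)_{+} \;\in\; W_0^{1,\infty}(\Omega),
\]
so that $\|\nabla v_n\|_{L^\infty(\Omega)}=1$, $\|v_n\|_{L^\infty(\Omega)}=\rho_n$, and the peak $y_n$ lies in the open set $\{m>0\}$. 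Standard $L^p\to L^\infty$ asymptotics together with the uniform bound $0<c_0\le C\le \|C\|_{L^\infty(\Omega)}$ (so that $C^{1/p}\to 1$ uniformly) yield, for each fixed $n$,
\[
\left(\int_\Omega \bigl(|\nabla v_n|^p + C v_n^p\bigr)\,dx\right)^{\!1/p}\!\!\longrightarrow\, \max(1,\rho_n), \qquad \left(\int_\Omega m\, v_n^p\,dx\right)^{\!1/p}\!\!\longrightarrow\, \rho_n,
\]
as $p\to\infty$. Plugging $v_n$ into \eqref{eigen.Cintro} gives $\limsup_p \lambda_{1,p}(C,m)^{1/p}\le \max(1,\rho_n)/\rho_n=\max(1/\rho_n,1)$, and letting $n\to\infty$ yields the matching upper bound $\max(1/R_{+},1)$.

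\textbf{Main obstacle.} The delicate point is justifying the asymptotics of $(\int_\Omega m\, v_n^p)^{1/p}$, since \emph{a priori} the sign-changing weight $m$ could make this integral negative. This is overcome by a Laplace-type concentration argument: outside an arbitrarily small ball $B_\delta(y_n)\subset\{m>0\}$ the integrand is bounded in absolute value by $\|m\|_{L^\infty(\Omega)}(\rho_n-\delta)^p$, while inside $B_\delta(y_n)$ the integrand is positive and of order $\rho_n^p$ up to a polynomial factor in $p$. The latter dominates exponentially, so $\int_\Omega m\, v_n^p>0$ for all $p$ large and the $p$-th root converges to $\rho_n$. The remaining steps—the elementary identity $(a^p+b^p)^{1/p}\to\max(a,b)$ and the passage $(\int v_n^p)^{1/p}\to\|v_n\|_{L^\infty(\Omega)}$—follow the template of Theorem \ref{teo1.intro}.
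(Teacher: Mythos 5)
Your proposal is correct. The upper bound is essentially the paper's own argument: the same cone test function supported on a maximal ball centered at (or near) a point of $\{m>0\}$, the same Laplace-type concentration estimate to guarantee $\int_\Omega m\,v^p>0$ and $(\int_\Omega m\,v^p)^{1/p}\to\rho$, and the same identification of the numerator's limit as $\max\{1,\rho\}$ coming from $|B|^{1/p}\to1$ and $(\int C v^p)^{1/p}\to\|v\|_\infty$. Where you genuinely diverge is the lower bound. The paper extracts a limit eigenfunction $u_\infty$ (uniform convergence plus weak $W^{1,q}_0$ convergence of the minimizers $u_p$), shows $\|u_\infty\|_{L^\infty(\Omega_+)}\ge1$, and then runs a case analysis on $R_+\le1$ versus $R_+\ge1$, using in the first case a chain of equalities to pin down $\|\nabla u_\infty\|_\infty=1/R_+$ and in the second case the estimate $\liminf_p(\int_\Omega C|u_p|^p)^{1/p}\ge u_\infty(x_0)\ge1$. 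You instead obtain both halves of the lower bound by direct comparison at the level of the Rayleigh quotient: dropping the potential gives $\lambda_{1,p}(C,m)\ge\lambda_{1,p}(m)$ and hence $\liminf\ge1/R_+$ by Theorem \ref{teo1.intro}, while the constraint forces $\int_\Omega|u|^p\ge1/\|m\|_{L^\infty(\Omega)}$ and hence $\lambda_{1,p}(C,m)\ge c_0/\|m\|_{L^\infty(\Omega)}$, whose $p$-th root tends to $1$. This is more elementary (no compactness, no limit function, no case split) and both inequalities are valid for every admissible test function, so you do not even need existence of a minimizer. What it does not deliver, and what the paper's longer route buys, is the accompanying information about the limit of the eigenfunctions (that $u_\infty$ exists, is Lipschitz, and minimizes the limiting variational quotient when $R_+\le1$); but that is not part of the statement you were asked to prove.
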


The rest of the paper is organized as follows: In Section \ref{sect-prelim} we
collect previous necessary results, namely we recall the definition of
viscosity solution and the equivalence between viscosity and weak solution in
the $p$-Laplacian setting. Next, in Section \ref{sect-1st}, we concentrate on
the first $\infty$-eigenvalue and prove Theorem \ref{teo1.intro}. In
Section \ref{sect-higher} we deal with higher eigenvalues and include some simple examples to see how the
eigenvalues depend on the set $\Omega_+$. Finally, in Section 
\ref{sect.Other} we deal with Theorem \ref{teo.C.intro}.

\section{Preliminary results}

\label{sect-prelim}

In this section we collect some results that will be used along this paper.

First, we observe that we can rewrite the first equation in
\eqref{eq.p} as
\begin{equation} \label{ec.diver}
-\mbox{div} (|\nabla u|^{p-2} \nabla u)  =\lambda m(x) |u|^{p-2}u
\end{equation}
or, expanding the divergence operator, as
\begin{equation}
-|\nabla u_{p}|^{p-4}\left(  |\nabla u_{p}|^{2}\Delta u_{p}+(p-2)\Delta
_{\infty}u_{p}\right)  =\lambda m(x) |u_{p}|^{p-2}u_{p}.\label{superecua}
\end{equation}
This equation is in divergence form and is nonlinear. Nevertheless it is elliptic (degenerate) and there are
multiple ways in which we can define solution to this problem. 
The first one is the concept of weak solution (that is closely related to the variational
nature of this problem).
\begin{definition}
A function $u \in W^{1,p}_0 (\Omega)$ is a
\textit{weak solution} of \eqref{ec.diver} if 
$$
\int_\Omega |\nabla u|^{p-2} \nabla u \nabla \varphi = \int_\Omega \lambda m(x) |u|^{p-2}u \varphi
$$
for every $\varphi \in W^{1,p}_0 (\Omega)$.
\end{definition}

Since our goal is to consider the limit as $p\rightarrow\infty$, we need to
choose an appropriate concept of solution such that it is somehow
\textquotedblleft stable\textquotedblright\ under the limit, in order to identify the limiting problem. 
The right
notion of solution to this problem is the viscosity one (see
e.g. \cite{JLM2}). Notice that the limit equation that appears in \eqref{ecuacion.infty.intro} 
is not in divergence form.

For the reader's convenience we briefly include the basics of the notion of
viscosity solution, that will be used in the next section to establish the
equation satisfied by the limiting function. Let $x,y\in\mathbb{R}$,
$z\in\mathbb{R}^{N}$, and $S$ be a real symmetric matrix. We define the
following continuous function
\[
H_{p}(x,y,z,S):=  \displaystyle-|z|^{p-4}\Big(|z|^{2}%
\mbox{trace}(S)+(p-2)\langle S\cdot z,z\rangle\Big)\displaystyle-\lambda
_{1,p}m(x)|u_{p}|^{p-2}u_{p}.
\]
Observe that $H_{p}$ is elliptic in the sense that $H_{p}(x,y,z,S)\geq
H_{p}(x,y,z,S^{\prime})$ if $S\leq S^{\prime}$ in the sense of bilinear forms,
and also that \eqref{superecua} can then be written as $H_{p}(x,u_{p},\nabla
u_{p},D^{2}u_{p})=0$. We are thus interested in viscosity sub and
supersolutions of the partial differential equation
\begin{equation}
\left\{
\begin{array}
[c]{ll}%
H_{p}(x,u,\nabla u,D^{2}u)=0 & \mbox{ in }\Omega,\\
u=0 & \mbox{ on }\partial\Omega.
\end{array}
\right.  \label{visco}%
\end{equation}

\begin{definition}
An upper semicontinuous function $u$ defined in $\Omega$ is a
\textit{viscosity subsolution} of $(\ref{visco})$ if $u|_{\partial\Omega}%
\leq0$ and, whenever $x_{0}\in\Omega$ and $\phi\in C^{2}(\Omega)$ are such that

\begin{itemize}
\item[i)] $u(x_{0})=\phi(x_{0})$,

\item[ii)] $u(x)<\phi(x)$ if $x\neq x_{0}$,
\end{itemize}

\noindent then
\[
H_{p}(x_{0},\phi(x_{0}),\nabla\phi(x_{0}),D^{2}\phi(x_{0}))\leq0.
\]
\end{definition}

\begin{definition}
A lower semicontinuous function $u$ defined in $\Omega$ is a \textit{viscosity
supersolution} of $(\ref{visco})$ if $u|_{\partial\Omega}\geq0$ and, whenever
$x_{0}\in\Omega$ and $\phi\in C^{2}(\Omega)$ are such that

\begin{itemize}
\item[i)] $u(x_{0})=\phi(x_{0})$,

\item[ii)] $u(x)>\phi(x)$ if $x\neq x_{0}$,
\end{itemize}

\noindent then
\[
H_{p}(x_{0},\phi(x_{0}),\nabla\phi(x_{0}),D^{2}\phi(x_{0}))\geq0.
\]
\end{definition}

We observe that in both of the above definitions the second condition is
required just in a neigbourhood of $x_{0}$ and the strict inequality can be
relaxed. We refer to \cite{CIL} for more details about the general theory of
viscosity solutions, and to \cite{JLM2} for viscosity solutions related to the
$\infty$-Laplacian and the $p$-Laplacian operators. The following result can
be shown as in \cite[Proposition 2.4]{MRU} (recall that $\lambda_{k,p}$ are as
in \eqref{higher-eigen} below).

\begin{lemma}
\label{sol.debil.es.viscosa} A continuous weak solution to the eigenvalue problem
\begin{equation}
\left\{
\begin{array}
[c]{ll}%
-\Delta_{p}u(x)=\lambda_{k,p}m(x)|u_{p}|^{p-2}u_{p} (x) & x \in \Omega,\\
u(x) =0 & x \in \partial\Omega,
\end{array}
\right.  \label{1.1.u}%
\end{equation}
is also a viscosity solution in the sense of the previous definition.
\end{lemma}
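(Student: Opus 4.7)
The strategy is to follow the now-standard argument for the equivalence of weak and viscosity solutions for quasilinear equations of $p$-Laplace type (as in Proposition~2.4 of \cite{MRU}, with origins in the work of Juutinen-Lindqvist-Manfredi). Writing the right-hand side as $f(x,s):=\lambda_{k,p}m(x)|s|^{p-2}s$, which is continuous in $(x,s)$, I would argue by contradiction, verifying the subsolution and supersolution properties separately. Let me focus on the supersolution case; the subsolution case is entirely analogous (or follows by applying the first case to $-u$).

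Suppose $u$ is a continuous weak solution but fails to be a viscosity supersolution. Then there exist $x_0\in\Omega$ and $\phi\in C^2(\Omega)$ with $\phi(x_0)=u(x_0)$, $\phi(x)<u(x)$ for $x\neq x_0$ in a neighborhood of $x_0$, and $-\Delta_p\phi(x_0)-f(x_0,\phi(x_0))<0$. After the standard reduction one may assume $\nabla\phi(x_0)\neq 0$, so that $\Delta_p\phi$ is classically well defined near $x_0$. By continuity there exist $\varepsilon>0$ and $r>0$ such that
$$-\Delta_p\phi(x)<f(x,\phi(x))-2\varepsilon\qquad\text{in }B_r(x_0).$$

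Next I would perturb $\phi$ upward by a small constant: set $\tilde\phi:=\phi+k$ and let $D:=\{\tilde\phi>u\}\cap B_r(x_0)$. For $k>0$ small, $D$ is nonempty (it contains $x_0$) and compactly contained in $B_r(x_0)$, because the strict inequality $\phi<u$ on $\partial B_r(x_0)$ is uniform. Since $\nabla\tilde\phi=\nabla\phi$ and $0\le\tilde\phi-u\le k$ on $D$, the uniform continuity of $s\mapsto|s|^{p-2}s$ on bounded sets yields, for $k$ small enough,
$$-\Delta_p\tilde\phi(x)=-\Delta_p\phi(x)<f(x,u(x))-\varepsilon\qquad\text{in }D.$$

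Finally, I would test the weak formulation of the equation for $u$ against $\psi:=(\tilde\phi-u)_+\in W_0^{1,p}(B_r(x_0))$, which is supported in $\overline D$ and vanishes on $\partial D$; integrating the pointwise strict inequality for $\tilde\phi$ against the same $\psi$ by parts and subtracting, one obtains
$$\int_D\bigl(|\nabla\tilde\phi|^{p-2}\nabla\tilde\phi-|\nabla u|^{p-2}\nabla u\bigr)\cdot\nabla(\tilde\phi-u)\,dx<-\varepsilon\int_D(\tilde\phi-u)\,dx<0.$$
However, the classical monotonicity inequality $(|\xi|^{p-2}\xi-|\eta|^{p-2}\eta)\cdot(\xi-\eta)\ge 0$ forces the left-hand side to be nonnegative, a contradiction. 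The main technical obstacle I expect is the case $\nabla\phi(x_0)=0$, where the expanded operator $H_p$ has an apparent $|z|^{p-4}$ singularity and the pointwise inequality is not directly meaningful; this is handled as in \cite{JLM2} by a separate argument (essentially, touching test functions with vanishing gradient automatically satisfy the viscosity inequality), which allows them to be excluded from the start of the contradiction.
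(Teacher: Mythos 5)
Your argument is correct and is exactly the approach the paper itself relies on: the paper gives no proof of Lemma \ref{sol.debil.es.viscosa} beyond citing \cite{MRU} (Proposition 2.4), whose proof is precisely this comparison argument (lift the test function by a small constant, test the weak formulation with $(\tilde\phi-u)_+$, and contradict the monotonicity of $\xi\mapsto|\xi|^{p-2}\xi$). The only point to watch is your parenthetical claim that test functions with $\nabla\phi(x_0)=0$ ``automatically'' satisfy the viscosity inequality: since $m$ changes sign the zeroth-order term $\lambda_{k,p}m(x_0)|u(x_0)|^{p-2}u(x_0)$ has no fixed sign, so this case genuinely requires the careful treatment of \cite{JLM2} rather than being automatic, but deferring to that reference is consistent with what the paper does.
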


Note that, from the results in \cite{cuesta}, variational eigenvalues in the sequences of positive/negative eigenvalues to our problem have
associated eigenfunctions that are weak solutions (and hence viscosity solutions) to \eqref{1.1.u}.

\section{The first eigenvalue.}

\label{sect-1st}

\textit{Proof of Theorem \ref{teo1.intro}}. Let $u_{p}$ be a solution to
\eqref{eq.p} and $v$ be any test function. We have that, due to the
variational characterization,
\[
\lambda_{1,p}=\displaystyle\frac{\displaystyle\int_{\Omega}|\nabla u_{p}|^{p}%
}{\displaystyle\int_{\Omega}m|u_{p}|^{p}}\leq\frac{\displaystyle\int_{\Omega
}|\nabla v|^{p}}{\displaystyle\int_{\Omega}m|v|^{p}}.
\]
Let $B_{r}(c):=\{x\in\mathbb{R}^{n}:\left\vert {x-c}\right\vert <r\}$ be a
ball contained in $\Omega$ and centered at a point $c\in\{m>0\}$, and define
the following function:
\[
w:=\left\{
\begin{array}
[c]{ll}%
d(x,\partial B_{r}(c)) & \text{if }x\in B_{r}(c),\\
0 & \text{if }x\not \in B_{r}(c).
\end{array}
\right.
\]
Using $w$ as a test function above we have%
\[
\lambda_{1,p}\leq\frac{|\Omega|}{\displaystyle\int_{B_{r}(c)}m|w|^{p}},
\]
which is equivalent to
\begin{equation}
\lambda_{1,p}^{1/p}\leq\frac{|\Omega|^{1/p}}{\left\Vert {w}\right\Vert
_{L^{p}(B_{r}(c),m)}}. \label{unop}%
\end{equation}

Now, choosing $\delta>0$ such that $B_{\delta}(c)\subset\{m>0\}$, we observe
that
\[%
\begin{array}
[c]{l}
\displaystyle\left\Vert {w}\right\Vert _{L^{p}(B_{r}(c),m)}
\displaystyle=\left(  \int_{B_{\delta}(c)}m|w|^{p}+\int_{B_{r}(c)\setminus
B_{\delta}(c)}m|w|^{p}\right)  ^{1/p}\\
\displaystyle \qquad \geq\left(  \int_{B_{\delta/2}(c)}|m^{1/p}(r-\delta/2)|^{p}-\Vert
m\Vert_{L^{\infty}}\int_{B_{r}(c)\setminus B_{\delta}(c)}|r-\delta
|^{p}\right)  ^{1/p}\\
\displaystyle \qquad \geq(r-\delta/2)\left(  C-\Vert m\Vert_{L^{\infty}}\int
_{B_{r}(c)\setminus B_{\delta}(c)}\left(  \frac{r-\delta}{r-\delta/2}\right)
^{p}\right)  ^{1/p}\\
\qquad\rightarrow r-\delta/2\qquad\mbox{ as }p\rightarrow\infty.
\end{array}
\]
On the other hand,
\[
\begin{array}
[c]{l}
\displaystyle\left\Vert {w}\right\Vert _{L^{p}(B_{r}(c),m)}
\displaystyle=\left(  \int_{B_{\delta}(c)}m|w|^{p}+\int_{B_{r}(c)\setminus
B_{\delta}(c)}m|w|^{p}\right)  ^{1/p}\\
\displaystyle \qquad \leq\left(  \int_{B_{\delta/2}(c)}|m^{1/p}r|^{p}+\Vert
m\Vert_{L^{\infty}}\int_{B_{r}(c)\setminus B_{\delta}(c)}|r-\delta
|^{p}\right)  ^{1/p}\\
\qquad\rightarrow r\qquad\mbox{ as }p\rightarrow\infty.
\end{array}
\]
Since $\delta$ can be chosen arbitrarily small we conclude that $\lim
_{p\rightarrow\infty}\left\Vert {w}\right\Vert _{L^{p}(B_{r}(c),m)}=r$. Now,
taking limits in $p$ in (\ref{unop}) we deduce that
\begin{equation}
\limsup_{p\rightarrow\infty}\lambda_{1,p}^{1/p}\leq\frac{1}{r}.
\label{limlambda1}%
\end{equation}
Therefore, as this inequality holds being $r$ the radius of any ball contained
in $\Omega$ and centered at $c\in\{m>0\}$, we get
\begin{equation}
\limsup_{p\rightarrow\infty}\lambda_{1,p}^{1/p}\leq\inf_{\left\{
r>0:B_{r}(c)\subset\Omega,\text{ }c\in\{m>0\}\right\}  }\frac{1}{r}%
=\inf_{\left\{  r>0:B_{r}(c)\subset\Omega,\text{ }c\in\Omega_{+}\right\}
}\frac{1}{r}=\frac{1}{R_{+}}. \label{largo}%
\end{equation}

On the other hand, using H\"{o}lder's inequality we have, for $q<p$,
\begin{equation}
\left\Vert {\nabla u_{p}}\right\Vert _{q}\leq\left\Vert {\nabla u_{p}%
}\right\Vert _{p}|\Omega|^{1/q-1/p}=\lambda_{1,p}^{1/p}|\Omega|^{1/q-1/p}\leq
C. \label{upnorm}%
\end{equation}
Hence, $\{u_{p}\}$ is a bounded sequence in $W_{0}^{1,q}(\Omega)$ and therefore
there is a subsequence (that we still call $u_{p}$) that converges weakly in
$W_{0}^{1,q}(\Omega)$ and uniformly in $\overline{\Omega}$ to a limit
$u_{\infty}$ (we are using here that $W_{0}^{1,q}(\Omega)\hookrightarrow
C(\overline{\Omega})$ when $q>N$). By a diagonal procedure we can obtain a
subsequence $u_{p}$ that converges weakly in $W_{0}^{1,q}(\Omega)$ for every
$1<q<\infty$ and uniformly in $\overline{\Omega}$ to $u_{\infty}$.

Now, recalling (\ref{largo}) and letting $p\rightarrow\infty$ in
(\ref{upnorm}) we derive that%
\[
\left\Vert {\nabla u_{\infty}}\right\Vert _{q}\leq\limsup_{p\rightarrow\infty
}\lambda_{1,p}^{1/p}|\Omega|^{1/q-1/p}\leq\frac{1}{R_{+}}|\Omega|^{1/q},
\]
and now taking $q\rightarrow\infty$ we finally get
\[
\left\Vert {\nabla u_{\infty}}\right\Vert _{\infty}\leq\frac{1}{R_{+}}.
\]
Hence $u_{\infty}$ belongs to $W_{0}^{1,\infty}(\Omega)$. Moreover, since we normalized the eigenfunctions by $\int_{\Omega
}m|u_{p}|^{p}=1$,
\[
\displaystyle1=\left(  \int_{\Omega}m|u_{p}|^{p}\right)  ^{1/p}\leq\left(
\int_{\Omega}m^{+}|u_{p}|^{p}\right)  ^{1/p}\rightarrow\Vert u_{\infty}%
\Vert_{L^{\infty}(\Omega_{+})}\qquad\mbox{ as }p\rightarrow\infty.
\]
Therefore, $\Vert u_{\infty}\Vert_{L^{\infty}(\Omega_{+})}\geq1$. Next we
notice that (since $u_{\infty}$ is Lipschitz continuous in $\overline{\Omega}%
$) there exists $x_{0}\in\Omega_{+}$ with
\[
u_{\infty}(x_{0})=\Vert u_{\infty}\Vert_{L^{\infty}(\Omega_{+})}\geq1.
\]
Now we observe that, if we take $y\in\partial\Omega$ such that $|x_{0}-y|=$
dist$(x_{0},\partial\Omega)$, we have
\[
1\leq u_{\infty}(x_{0})=u_{\infty}(x_{0})-u_{\infty}(y)\leq\left\Vert {\nabla
u_{\infty}}\right\Vert _{\infty}|x_{0}-y|\leq\frac{1}{R_{+}}|x_{0}-y|.
\]
Hence, as $R_{+}:=\max_{x\in\Omega_{+}}d(x,\partial\Omega)$ we get that all
the previous inequalities must be equalities and so
\[
u_{\infty}(x_{0})=1,\qquad\left\Vert {\nabla u_{\infty}}\right\Vert _{\infty
}=\frac{1}{R_{+}},\qquad\mbox{and}\qquad d(x_{0},\partial\Omega)=R_{+}.
\]
Notice that this implies that $u_{\infty}$ is a minimizer for the limit
variational problem, that is,
\[
\frac{\left\Vert \nabla u_{\infty}\right\Vert _{L^{\infty}(\Omega)}%
}{\left\Vert u_{\infty}\right\Vert _{L^{\infty}(\Omega^{+})}}=\inf_{v\in
W_{0}^{1,\infty}(\Omega)}\frac{\left\Vert \nabla v\right\Vert _{L^{\infty
}(\Omega)}}{\left\Vert v\right\Vert _{L^{\infty}(\Omega^{+})}}.
\]

On the other hand, again employing \eqref{upnorm} we infer that
\[
\left\Vert {\nabla u_{\infty}}\right\Vert _{q}\leq\liminf_{p\rightarrow\infty
}\left\Vert {\nabla u_{p}}\right\Vert _{q}\leq\left(  \liminf_{p\rightarrow
\infty}\lambda_{1,p}^{1/p}\right)  |\Omega|^{1/q},
\]
and letting $q\rightarrow\infty$ we conclude that
\[
\frac{1}{R_{+}}=\left\Vert {\nabla u_{\infty}}\right\Vert _{\infty}\leq
\liminf_{p\rightarrow\infty}\lambda_{1,p}^{1/p}.
\]
Taking into account \eqref{largo} we derive that there exists the limit as
$p\rightarrow\infty$ of $(\lambda_{1,p})^{1/p}$ ($:=\lambda_{1,\infty}$) and
that is given by
\[
\lambda_{1,\infty}=\frac{1}{R_{+}}=\frac{\left\Vert \nabla u_{\infty
}\right\Vert _{L^{\infty}(\Omega)}}{\left\Vert u_{\infty}\right\Vert
_{L^{\infty}(\Omega^{+})}}=\inf_{v\in W_{0}^{1,\infty}(\Omega)}\frac
{\left\Vert \nabla v\right\Vert _{L^{\infty}(\Omega)}}{\left\Vert v\right\Vert
_{L^{\infty}(\Omega^{+})}}.
\]
This ends the proof of the first assertion of the theorem.

The next and final step in this proof is to find the equation satisfied by
$u_{\infty}$. We start by addressing the set $\{m=0\}^{o}$ and prove that
\[
-\Delta_{\infty}u_{\infty}=0\quad\text{in }\{m=0\}^{o}\text{ in the viscosity
sense.}%
\]
Following the definition of viscosity solution as stated in the previous
section, let $x_{0}\in\{m=0\}^{o}$ and $\phi\in C_{loc}^{2}$ be such that
$u_{\infty}(x_{0})=\phi(x_{0})$ and $u_{\infty}(x)<\phi(x)$, for all $x\in B$
where $B$ is an open ball containing $x_{0}$. We need to show that
\[
-\Delta_{\infty}\phi(x_{0})\leq0.
\]

Since $u_{p}\rightarrow u_{\infty}$ uniformly, the function $u_{p}-\phi$
reaches a maximum over $B$ at an interior point, say $x_{p}$. First we see
that $x_{0}$ is the only limit point of $\{x_{p}\}$. In fact, if there existed
another cluster point $x^{\ast}\neq x_{0}$ , then $x_{p^{\prime}}\rightarrow
x^{\ast}$ for $x_{p^{\prime}}$ maximum point of $u_{p^{\prime}}-\phi$ in $B$.
In particular, we would have%
\[
u_{p^{\prime}}(x_{p^{\prime}})-\phi(x_{p^{\prime}})\geq u_{p^{\prime}}%
(x_{0})-\phi(x_{0}).
\]
Letting $p^{\prime}$ tend to infinity and recalling that $u_{p}$ tends to
$u_{\infty}$ in $C(\Omega)$ due to classical compactness theorems,%

\[
u_{\infty}(x^{*})-\phi(x^{*})\geq u_{\infty}(x_{0})-\phi(x_{0}) = 0,
\]
which is a contradiction with the definition of $x_{0}$ and $\phi$. Therefore,
$x_{p^{\prime}}\rightarrow x_{0}$.

Since $x_{p^{\prime}}$ is a maximum point of $u_{p^{\prime}}-\phi$ in $B$ from
the equation satisfied by $u_{p}$ at $x_{p}\in B$ we obtain
\[
-|\nabla\phi(x_{p})|^{p-4}\left(  |\nabla\phi(x_{p})|^{2}\Delta\phi
(x_{p})+(p-2)\Delta_{\infty}\phi(x_{p})\right)  \leq0.
\]
Assuming $\phi$ is such that $\nabla\phi(x_{0}) \neq0 $ (otherwise we
trivially obtain $-\Delta_{\infty}\phi(x_{0}) = 0$) we have that $\nabla
\phi(x_{p}) \neq0 $, and hence we may divide by $(p-2)|\nabla\phi
(x_{p})|^{p-4}$ and obtain
\[
-\frac{|\nabla\phi(x_{p})|^{2}\Delta\phi(x_{p})}{p-2} - \Delta_{\infty}%
\phi(x_{p})\leq0.
\]

Now, letting $p\rightarrow\infty$ we obtain
\[
-\Delta_{\infty}\phi(x_{0})\leq0,
\]
that is, $u_{\infty}$ is a viscosity subsolution to $-\Delta_{\infty}v=0.$

Similarly one can establish that $u_{\infty}$ is a viscosity supersolution to
$-\Delta_{\infty}v=0,$ and hence we conclude that $u_{\infty}$ is a viscosity
solution to $-\Delta_{\infty}v=0$ in $\Omega_{0}$.

Now, we deal with the other cases. We start by looking at points where
$u_{\infty}$ is positive.

We consider $x_{0}\in\{m>0\}$ and $\phi\in C_{loc}^{2}$ be such that
$u_{\infty}(x_{0})=\phi(x_{0})$ and $u_{\infty}(x)<\phi(x)$, for all $x\in B$
where $B$ is an open ball containing $x_{0}$. Following the steps used before
we now arrive to
\[
-\frac{|\nabla\phi(x_{p})|^{2}\Delta\phi(x_{p})}{p-2}-\Delta_{\infty}%
\phi(x_{p})\geq\frac{\lambda_{1,p}m(x_{p})|u_{p}(x_{p})|^{p-1}}{(p-2)|\nabla
\phi(x_{p})|^{p-4}}.
\]
Again we may assume that $\phi$ is such that $\nabla\phi(x_{p})\neq0$ (since
the right hand side is positive) and then we may divide by $(p-2)|\nabla
\phi(x_{p})|^{p-4}$. Again due to the fact that the right hand side is positive we may rewrite it
as
\[
\frac{\lambda_{1,p}m(x_{p})|u_{p}(x_{p})|^{p-1}}{(p-2)|\nabla\phi
(x_{p})|^{p-4}}=\frac{1}{p-2}\left(  \frac{\lambda_{1,p}^{1/p}m^{1/p}%
(x_{p})|u_{p}(x_{p})|^{\frac{p-1}{p}}}{|\nabla\phi(x_{p})|^{\frac{p-4}{p}}%
}\right)  ^{p}.
\]
As $p\rightarrow\infty$ we have%
\[
-\Delta_{\infty}\phi(x_{0})\geq\lim_{p\rightarrow\infty}\frac{1}{p-2}\left(
\frac{\lambda_{1,p}^{1/p}m^{1/p}(x_{p})|u_{p}(x_{p})|^{\frac{p-1}{p}}}%
{|\nabla\phi(x_{p})|^{\frac{p-4}{p}}}\right)  ^{p}.
\]

Since $\phi$ is in $C^{2}$ the left hand side is well defined and that implies
that the right hand side must be finite. This in turn leads to
\[
\lambda_{1,\infty}\phi(x_{0})\leq|\nabla\phi(x_{0})|.
\]
Therefore, we have obtained
\[
\min\{-\Delta_{\infty}\phi(x_{0}), |\nabla\phi(x_{0})| - \lambda_{1,\infty}
\phi(x_{0}) \} \geq0.
\]
That is, $u_{\infty}$ is a viscosity subsolution.

To show that $u_{\infty}$ is a viscosity supersolution we consider $x_{0}%
\in\{m>0\}$ and $\phi\in C_{loc}^{2}$ be such that $u_{\infty}(x_{0}%
)=\phi(x_{0})$ and $u_{\infty}(x)>\phi(x)$, for all $x\in B$ where $B$ is an
open ball containing $x_{0}$. In this case we arrive to
\[
-\frac{|\nabla\phi(x_{p})|^{2}\Delta\phi(x_{p})}{p-2}-\Delta_{\infty}%
\phi(x_{p})\leq\frac{\lambda_{1,p}m(x_{p})|u_{p}(x_{p})|^{p-1}}{(p-2)|\nabla
\phi(x_{p})|^{p-4}}.
\]
Again we may assume that $\phi$ is such that $\nabla\phi(x_{p})\neq0$ and then
we may divide by $(p-2)|\nabla\phi(x_{p})|^{p-4}$. Since the right hand side
is positive we may rewrite it as
\[
\frac{\lambda_{1,p}m(x_{p})|u_{p}(x_{p})|^{p-1}}{(p-2)|\nabla\phi
(x_{p})|^{p-4}}=\frac{1}{p-2}\left(  \frac{\lambda_{1,p}^{1/p}m^{1/p}%
(x_{p})|u_{p}(x_{p})|^{\frac{p-1}{p}}}{|\nabla\phi(x_{p})|^{\frac{p-4}{p}}%
}\right)  ^{p}.
\]
As $p\rightarrow\infty$ we get%
\[
-\Delta_{\infty}\phi(x_{0})\leq\lim_{p\rightarrow\infty}\frac{1}{p-2}\left(
\frac{\lambda_{1,p}^{1/p}m^{1/p}(x_{p})|u_{p}(x_{p})|^{\frac{p-1}{p}}}%
{|\nabla\phi(x_{p})|^{\frac{p-4}{p}}}\right)  ^{p}%
\]

Now, if
\[
|\nabla\phi(x_{0})| - \lambda_{1,\infty} \phi(x_{0}) = |\nabla\phi(x_{0})| -
\lambda_{1,\infty} u (x_{0}) >0
\]
then the right hand side goes to 0 as $p\to\infty$ and we get that
\[
\lambda_{1,\infty}\phi(x_{0}) < |\nabla\phi(x_{0})| \implies-\Delta_{\infty
}\phi(x_{0}) \leq0
\]
Therefore, we have obtained
\[
\min\{-\Delta_{\infty}\phi(x_{0}), |\nabla\phi(x_{0})| - \lambda_{1,\infty}
\phi(x_{0}) \} \leq0.
\]
That is, $u_{\infty}$ is a viscosity supersolution.

The equation in the set $\{m<0\}$ when $u_{\infty}$ is positive can be
obtained with analogous computations. When $u_{\infty}$ is negative we argue
in the same way noticing that the inequalities are reversed. \qed

\section{Higher eigenvalues.}

\label{sect-higher}

In this section we analyse the case of higher eigenvalues. In order to do so,
we first recall that there exists a sequence of positive eigenvalues that can
be constructed by variational methods. Since $m^{+}:=\max\{m,0\}\not \equiv 0$, $m\in
C(\overline{\Omega})$ and $\Omega$ is a bounded domain, we are in the setting
described in \cite{cuesta}. If we want to allow the domain $\Omega$ to
be unbounded we would need other restrictions on $m$ to assure our variational
problem is set on a manifold (see \cite{Sm} and also 
\cite{SW} for further details) and similar results hold.

In fact, positive eigenvalues to our problem correspond (via Lagrange
multipliers type arguments) to positive critical values of the functional
\[
\Phi:W_{0}^{1,p}(\Omega)\rightarrow\mathbb{R},\qquad\Phi(u):=\int_{\Omega
}|\nabla u|^{p},
\]
restricted to the $C^{1}$ manifold 
$\mathcal{A}^{+}(m):=\left\{  u\in W_{0}^{1,p}(\Omega
):\int_{\Omega}m|u|^{p}=1\right\}  $. Such critical values can be characterized
by being the image through $\Phi$ of a function $u\in \mathcal{A}^{+}(m)$ such that
$\Phi^{\prime}(u)$ is orthogonal to the tangent space of $\mathcal{A}^{+}(m)$ at $u$,
$T_{u}\mathcal{A}^{+}(m)$. We emphasize that, since we choose $m^{+}\not \equiv 0$,
then $\mathcal{A}^{+}(m)\neq\emptyset$ is a $C^{1}$ manifold.

Therefore, we now focus on the analysis of such critical values. Since this is a
nonlinear setting and we seek a min-max type principle, we need an appropriate
notion of measure, such as the genus of Krasnoselskii. For the sake of
completeness we include it here (see Juutinen-Lindqvist \cite{Ju-Li-05}):

\begin{definition}
Let $E$ be a real Banach space and let $A\subset E$ be any closed symmetric
set (that is, $v\in A\Rightarrow-v\in A$). The \textit{{genus} $\gamma(A)$ of
$A$ is the smallest integer $m$ such that there exists a continuous odd
mapping $\phi:A\rightarrow\mathbb{R}^{m}\setminus{0}$. If no such integer
exists we write $\gamma(A)=\infty$. }
\end{definition}

If $0\in A$ then immediately $\gamma(A)=\infty$. On the other hand if
$\gamma(A)=1$ then $A$ is non-connected.

If we restrict ourselves to $\Sigma_{k}, k=1,2,\dots$ the collection of all
symmetric compact subsets $A\subset \mathcal{A}^{+}(m)$ such that $\gamma(A)\geq k$
then, such as for the $p$-Laplacian case (see 
\cite{GP1}), for the problem with weights it is known that (see 
\cite{SW}) there exists an increasing sequence of positive eigenvalues of
\eqref{1.1.u}, converging to $\infty$, characterized by
\begin{equation}
\label{higher-eigen}\lambda_{k,p}=\inf_{A\in\Sigma_{k}}\sup_{u\in A}%
\int_{\Omega}|\nabla u|^{p}.
\end{equation}

Observe that, since $\gamma(\{u,-u\})=1$ we recover the usual definition for
$\lambda_{1,p}$. We also recall the following lemma (see \cite{S}) that
provides a way to compute the genus of some specific subsets of $W_{0}^{1,p}$.

\begin{lemma}
Let $A\subset W_{0}^{1,p}(\Omega)$ and $U\subset\mathbb{R}^{k}$ be a bounded
neighborhood of $0$. If there exists an odd homeomorphism $h:A\rightarrow
\partial U$ then $\gamma(A)=k$.
\end{lemma}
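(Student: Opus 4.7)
The plan is to establish the equality $\gamma(A)=k$ by the two standard inequalities, using the definition of the Krasnoselskii genus for the upper bound and Borsuk's antipodal theorem for the lower bound.

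For the upper bound $\gamma(A)\le k$, I will exploit the fact that, since $U$ is a neighborhood of $0$, we have $0\in\interior{U}$ and in particular $0\notin\partial U$. Therefore the composition of $h$ with the natural inclusion $\partial U\hookrightarrow\mathbb{R}^{k}\setminus\{0\}$ provides a continuous odd map $A\to\mathbb{R}^{k}\setminus\{0\}$, and the definition of genus immediately yields $\gamma(A)\le k$.

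For the lower bound $\gamma(A)\ge k$, I will argue by contradiction: assume $\gamma(A)\le k-1$, so that there exists a continuous odd map $\phi:A\to\mathbb{R}^{k-1}\setminus\{0\}$. Since $h$ is an odd homeomorphism (so $h^{-1}$ is also continuous and odd), the map $\psi:=\phi\circ h^{-1}:\partial U\to\mathbb{R}^{k-1}\setminus\{0\}$ is continuous and odd. By Tietze's theorem extend $\psi$ to a continuous map $\widetilde{\psi}:\overline{U}\to\mathbb{R}^{k-1}$, and symmetrize it by replacing $\widetilde{\psi}(x)$ with $\tfrac{1}{2}\bigl(\widetilde{\psi}(x)-\widetilde{\psi}(-x)\bigr)$, which is legitimate because $U$ (and hence $\overline{U}$) is symmetric in view of the existence of the odd homeomorphism $h$. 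This produces a continuous odd extension still denoted $\widetilde{\psi}$ that agrees with $\psi$ on $\partial U$.

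Now define $F:\overline{U}\to\mathbb{R}^{k}$ by $F(x):=(\widetilde{\psi}(x),0)$. Then $F|_{\partial U}$ is odd and does not vanish, so Borsuk's antipodal theorem ensures that the Brouwer degree $\deg(F,U,0)$ is an odd integer, in particular nonzero. On the other hand, $F$ takes values in the proper subspace $\mathbb{R}^{k-1}\times\{0\}\subset\mathbb{R}^{k}$, so $0$ is not a regular value reached with nontrivial local degree, and a standard approximation by smooth maps whose image misses any prescribed point arbitrarily close to $0$ along the missing coordinate forces $\deg(F,U,0)=0$. This contradicts the previous assertion, proving $\gamma(A)\ge k$. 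The main technical point to be careful about is the symmetrization of the extension so that Borsuk's theorem actually applies; everything else is a routine combination of the definition of genus, continuity of $h^{-1}$, and elementary degree theory.
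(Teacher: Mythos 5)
The paper does not actually prove this lemma: it is quoted verbatim from Struwe's book \cite{S} and used as a black box. Your argument is, in essence, the standard textbook proof from that source (upper bound from the definition of genus, lower bound via Tietze extension, odd symmetrization, Borsuk's theorem and the excision/solvability properties of the Brouwer degree), so there is nothing to compare against in the paper itself; the overall strategy is correct and is the expected one.

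There is, however, one genuine gap in your write-up: the assertion that ``$U$ (and hence $\overline{U}$) is symmetric in view of the existence of the odd homeomorphism $h$.'' What the hypothesis gives you is only that $\partial U=h(A)$ is symmetric (since $A$ is symmetric and $h$ is odd); this does \emph{not} force $U$ or $\overline{U}$ to be symmetric. For instance, in $\mathbb{R}^{2}$ take $U=B_{4}(0)\setminus\bigl(\overline{B_{1/2}((2,0))}\cup\partial B_{1/2}((-2,0))\bigr)$: its boundary is symmetric, but $(-2,0)\in\overline{U}$ while $(2,0)\notin\overline{U}$. Symmetry of $\overline{U}$ is needed twice in your argument --- to make sense of $\frac{1}{2}\bigl(\widetilde{\psi}(x)-\widetilde{\psi}(-x)\bigr)$ on $\overline{U}$, and in the hypotheses of Borsuk's theorem itself --- so it cannot be waved away. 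The statement as reproduced in the paper silently drops the word ``symmetric'' from Struwe's hypothesis on $U$; you should either restore that hypothesis or repair the proof, e.g.\ by replacing $U$ with $W:=U\cap(-U)$, which is a bounded open \emph{symmetric} neighborhood of $0$ whose boundary satisfies $\partial W\subset\partial U$ (using the symmetry of $\partial U$). Your degree argument applied to $W$ gives $\gamma(\partial W)\geq k$, and since $h^{-1}|_{\partial W}:\partial W\rightarrow A$ is continuous and odd, monotonicity of the genus yields $\gamma(A)\geq\gamma(\partial W)\geq k$. With that modification (and a slightly more careful phrasing of why $\deg(F,U,0)=0$, namely that $0$ lies in the same component of $\mathbb{R}^{k}\setminus F(\partial U)$ as points $(0,\dots,0,\varepsilon)\notin F(\overline{U})$, where the degree vanishes), the proof is complete.
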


Using this characterization we now proceed to prove the second theorem stated
in the introduction. Namely we establish an upper bound for the sequence of
eigenvalues.

\textit{Proof of Theorem \ref{teo2.intro}}. For simplicity we present the
proof for $k=2$, that is, for the second eigenvalue. The proof for $k>2$ follows by the same ideas.
Let $r_{2}>0$ be such
that there exist disjoint open balls $B_{1}=B(c_{1},r_{2})\subset\Omega$ and
$B_{2}=B(c_{2},r_{2})\subset\Omega$ with $c_{1},c_{2}\in\Omega_{+}$.
Using $r_{2}$ we define the truncated cone functions $C_{1},C_{2}$ by
\[
C_{1}(x):=\left(  r_{2}-|x-c_{1}|\right)  ^{+},\quad C_{2}(x):=\left(
r_{2}-|x-c_{2}|\right)  ^{+}.
\]

Set $A:=\,<C_{1},C_{2}>\cap\{v\in W_{0}^{1,\infty}:\Vert{v}\Vert
_{\infty,\Omega}=1\}$. We have that $A$ is closed and, by the previous lemma,
has genus 2. Therefore
\[
\lambda_{2,p}^{1/p}\leq\sup_{v\in A}\frac{\displaystyle \left(  \int_{\Omega}|\nabla
v|^{p}\right)  ^{1/p}}{\displaystyle  \left(  \int_{\Omega}m|v|^{p}\right)  ^{1/p}}.
\]
Now let $v:=\alpha C_{1}+\beta C_{2}$. Since $C_{1}$ and $C_{2}$ have disjoint
support we can write,
\[
\int_{\Omega}|\nabla v|^{p}=\left(  |\alpha|^{p}+|\beta|^{p}\right)
|B_{r_{2}}|.
\]

On the other hand, after a change of variables, we obtain, 
\[
\int_{\Omega}m(x)|v|^{p}=\int_{B_{r_{2}}(0)}\left(  \left\vert \alpha\right\vert
^{p}m\left(  x+c_{1}\right)  +\left\vert \beta\right\vert ^{p}m\left(
x+c_{2}\right)  \right)  \left\vert r_{2}-\left\vert x\right\vert \right\vert
^{p}.
\]
By assumption we have that $m(c_{1}),m(c_{2})>0$ and thus there exists
$\delta>0$ such that $m(x+c_{1}),m(x+c_{2})>0$ for $x\in B_{\delta}(0)$. Therefore,
\[
\begin{array}
[c]{l}
\displaystyle\left\Vert {v}\right\Vert _{L^{p}(\Omega,m)}
\displaystyle=\left(  \int_{B_{\delta}(0)}\left(  \left\vert \alpha\right\vert
^{p}m\left(  x+c_{1}\right)  +\left\vert \beta\right\vert ^{p}m\left(
x+c_{2}\right)  \right)  \left\vert r_{2}-\left\vert x\right\vert \right\vert
^{p}\right.  +\\
\displaystyle\hspace{7em} +\left.  \int_{B_{r_{2}}(0)\setminus B_{\delta}%
(0)}\left(  \left\vert \alpha\right\vert ^{p}m\left(  x+c_{1}\right)
+\left\vert \beta\right\vert ^{p}m\left(  x+c_{2}\right)  \right)  \left\vert
r_{2}-\left\vert x\right\vert \right\vert ^{p}\right)  ^{1/p}\\
\displaystyle\geq\left(  \int_{B_{\delta/2}(0)}\left\vert \left(  |\alpha
|^{p}m(x+c_{1})+|\beta|^{p}m(x+c_{2})\right)  ^{1/p}(r_{2}-\delta
/2)\right\vert ^{p}\right.  -\\
\displaystyle\hspace{7em}-\left.  \left(  |\alpha|^{p}+|\beta|^{p}\right)
\Vert m\Vert_{L^{\infty}}\int_{B_{r_{2}}(0)\setminus B_{\delta}(0)}%
|r_{2}-\delta|^{p}\right)  ^{1/p}\\
\qquad\rightarrow r_{2}-\delta/2\qquad\mbox{ as }p\rightarrow\infty.
\end{array}
\]
Similarly,
\[
\begin{array}
[c]{l}
\displaystyle\left\Vert {v}\right\Vert _{L^{p}(\Omega,m)}
\displaystyle=\left(  \int_{B_{\delta}(0)}\left(  |\alpha|^{p}m(x+c_{1}
)+|\beta|^{p}m(x+c_{2})\right)  |r_{2}-|x||^{p}\right.  +\\
\displaystyle\hspace{7em}\left. +  \int_{B_{r_{2}}(0)\setminus B_{\delta}
(0)}\left(  |\alpha|^{p}m(x+c_{1})+|\beta|^{p}m(x+c_{2})\right)
|r_{2}-|x||^{p}\right)  ^{1/p}\\
\displaystyle\leq\left(  \int_{B_{\delta/2}(0)}|\left(  |\alpha|^{p}
m(x+c_{1})+|\beta|^{p}m(x+c_{2})\right)  ^{1/p}r_{2}|^{p}+\right.  \\
\displaystyle\hspace{7em}+\left.  \left(  |\alpha|^{p}+|\beta|^{p}\right)
\Vert m\Vert_{L^{\infty}}\int_{B_{r_{2}}(0)\setminus B_{\delta}(0)}
|r_{2}-\delta|^{p}\right)^{1/p}\\
\qquad\rightarrow r_{2}\qquad\mbox{ as }p\rightarrow\infty.
\end{array}
\]
Since $\delta$ can be chosen arbitrarily small we conclude that $$\lim
_{p\rightarrow\infty}\left\Vert {v}\right\Vert _{L^{p}(B_{r_{2}}(0),m)}=r_{2}.
$$ 
Now, taking limits in $p$ in the inequality for the eigenvalue we have
that
\[
\limsup_{p\rightarrow\infty}\lambda_{2,p}^{1/p}\leq\frac{1}{r_{2}}.
\]
Therefore, as this inequality holds for any $r_{2}$ as above, we get
\[
\limsup_{p\rightarrow\infty}\lambda_{1,p}^{1/p}\leq\inf_{r_{2}}\frac{1}{r_{2}%
}=\frac{1}{R_{2,+}}.
\]
The proof is completed. \qed

The upper bound established above is actually attained in the case $k=2$, that
is, we can completely characterize $\lambda_{2,\infty}$ by means of $R_{2,+}$,
the maximum possible radius of two disjoint balls in $\Omega$ centered at
$\Omega_{+}$. Given the result of Theorem \ref{teo2.intro}, we only need to
prove that the reverse inequality holds, when $k=2$.

Arguing as in the proof of Theorem \ref{teo1.intro} we can easily deduce that
at least a subsequence of the sequence of eigenfunctions $\{u_{2,p}\}$
converges uniformly in $\overline{\Omega}$ to $u_{2,\infty}$. Moreover, this
function $u_{2,\infty}$ is a viscosity solution of a problem such as
\eqref{ecuacion.infty.intro} with $\lambda_{1,\infty}$ replaced by some
$\Lambda$ satisfying $\Lambda\leq\frac{1}{R_{2,+}}$.\smallskip

\textit{Proof of Theorem \ref{teo3.intro}}. From the condition imposed on all
eigenfunctions we can deduce that
\[%
\begin{array}
[c]{l}%
\displaystyle1=\left(  \int_{\Omega}m|u_{2,p}|^{p}\right)  ^{1/p}\\
\displaystyle\qquad\leq\left(  \int_{\Omega_{+}}m|u_{2,p}|^{p}\right)
^{1/p}\\
\displaystyle\qquad=\left(  \int_{\Omega_{+}}m|u_{2,p}^{+}|^{p}+\int
_{\Omega_{+}}m|u_{2,p}^{-}|^{p}\right)  ^{1/p}\\
\qquad\rightarrow\max\left\{  \Vert u_{2,\infty}^{+}\Vert_{L^{\infty}%
(\Omega_{+})},\Vert u_{2,\infty}^{-}\Vert_{L^{\infty}(\Omega_{+})}\right\}
\qquad\mbox{ as }p\rightarrow\infty,
\end{array}
\]
where $u_{2,\infty}^{\pm}$ denote the positive and negative parts of
$u_{2,\infty}$.

Since $u_{2,\infty}^{\pm}$ are Lipschitz continuous in $\overline{\Omega}$
there exist $x_{1},x_{2}\in\Omega_{+}$ such that
\[
\Vert u_{2,\infty}^{+}\Vert_{L^{\infty}(\Omega_{+})}=u_{2,\infty}^{+}%
(x_{1})\text{\quad and}\quad\Vert u_{2,\infty}^{-}\Vert_{L^{\infty}(\Omega
_{+})}=-u_{2,\infty}^{-}(x_{2}).
\]
Let now $\mathcal{N}^{\pm}\subset\Omega_{+}$ be nodal sets of $u_{2,\infty
}^{\pm}$ respectively and such that $x_{1}\in\mathcal{N}^{+}$ and $x_{2}%
\in\mathcal{N}^{-}$.

By Theorem $3.2$ in \cite{cuesta} (see also Theorem $8.1$ in \cite{Ju-Li-05}
for the classical setting), since $u_{2,\infty}^{+}>0$ is a viscosity solution
to $\min\{|\nabla v|-\Lambda v,-\Delta_{\infty}v\}=0$ in $\mathcal{N}^{+}$ and
$u_{2,\infty}^{+}=0$ on $\partial\mathcal{N}^{+}$, then
\[
\Lambda=\lambda_{1,\infty}(\mathcal{N}^{+})=\Vert\nabla u_{2,\infty}^{+}%
\Vert_{\infty,\mathcal{N}^{+}}=\frac{1}{R_{1,+}(\mathcal{N}^{+})}.
\]
Moreover $u_{2,\infty}^{+}(x_{1})=1.$ We obtain a similar result for
$u_{2,\infty}^{-}$. As a first conclusion we see that
\[
\Vert u_{2,\infty}^{+}\Vert_{L^{\infty}(\Omega_{+})}=\Vert u_{2,\infty}%
^{-}\Vert_{L^{\infty}(\Omega_{+})}=1.
\]
Here we are using that the only (positive) eigenvalue of \eqref{eq.p} that has an associated eigenfunction of constant sign is the first eigenvalue, see e.g. \cite[Section 1 ]{cuesta2}.

On the other hand, we also conclude that there exist two balls $B_{+}
\subset\mathcal{N}^{+}$ and $B_{-}\subset\mathcal{N}^{-}$ with radius
$R_{1,+}(\mathcal{N}^{\pm})$ respectively. Since $B_{+}$ and $B_{-}$ are
disjoint and both contained in $\Omega_{+}$, by the definition of $R_{2,+}$ we
have that
\[
R_{2,+}\geq\max\{R_{1,+}(\mathcal{N}^{+}),R_{1,+}(\mathcal{N}^{-})\}.
\]

Finally,
\[
\frac{1}{R_{2,+}}\leq\frac{1}{R_{1,+}}=\Vert\nabla u_{2,\infty}^{\pm}
\Vert_{\infty,\mathcal{N}^{\pm}}\leq\Vert\nabla u_{2,\infty}\Vert_{\infty}
\leq\liminf_{p\rightarrow\infty}\lambda_{2,p}^{1/p}\leq\frac{1}{R_{2,+}},
\]
where we have used the result of the previous theorem in the last inequality. Hence,
\[
\lambda_{2,\infty}=\frac{1}{R_{2,+}}.
\]
This ends the proof. \qed

\subsection{Examples} Now let us present some simple examples to see how the geometry 
of $\Omega_+$ affects the eigenvalues $\lambda_{1,\infty} (m)$ and $\lambda_{2,\infty} (m)$.
Notice that the size of the weight is not relevant for the limit eigenvalue problem, what matters 
here is the set $\Omega_+=\overline{\{m>0\}}$.

In what follows we will fix $\Omega$ as being the unit ball in $\mathbb{R}^2$ (for simplicity).
In this case we have
$$
\lambda_{1,\infty} (1) = 1, \qquad \mbox{ and } \qquad \lambda_{2,\infty} (1) = 2,
$$
see \cite{Ju-Li-05,JLM}.

{\bf Example 1.} Let $\Omega_+ = \overline{B_\delta (0)}$ with $\delta$ small
be a ball centered at the origin. From our results we obtain
$$
\lambda_{1,\infty} (m) = 1, \qquad \mbox{ and } \qquad \lambda_{2,\infty} (m) = \frac{1}{\delta}.
$$

{\bf Example 2.} Let $\Omega_+ = \{ x\in B_1(0) : \mbox{dist} (x ,\partial B_1(0)) \leq \delta \} $ be a small
strip around the boundary $\partial B_1(0)$ of width $\delta$.
Now, we have
$$
\lambda_{1,\infty} (m) = \frac{1}{\delta}, \qquad \mbox{ and } \qquad \lambda_{2,\infty} (m) = \frac{1}{\delta}.
$$
Notice that in this case we have $\lambda_{1,\infty} (m) = \lambda_{2,\infty} (m)$.

{\bf Example 3.} Let $\Omega_+ = \overline{B_\delta ((1/2,0))} \cup \overline{B_\delta ((-1/2,0))}$
the union of two small balls. In this case we get
$$
\lambda_{1,\infty} (m) = \frac{2}{1 + 2 \delta}, \qquad \mbox{ and } \qquad \lambda_{2,\infty} (m) = 2.
$$

\section{The first eigenvalue for a slightly different operator}
\label{sect.Other}

In this section we analyze a slightly different operator, namely we now
investigate the following eigenvalue problem
\begin{equation}
\left\{
\begin{array}
[c]{ll}
-\Delta_{p}u(x)+C(x)|u(x)|^{p-2}u(x)=\lambda m(x)|u|^{p-2}u(x) & x\in\Omega,\\
u(x)=0 & x\in\partial\Omega,
\end{array}
\right.  \label{eq.p.other}
\end{equation}
where $C$ is continuous and positive in $\overline{\Omega}$ and 
$m$ changes sign and satisfies the conditions imposed in the previous sections.

It is known (see \cite{cuesta2}) that there exists a principal eigenvalue
\begin{equation}
\lambda_{1,p}(C,m)=\min_{\int_{\Omega}m|u|^{p}=1}\int_{\Omega}|\nabla
u|^{p}+C|u|^{p}. \label{eigen.C}
\end{equation}

Our aim is to compute the limit 
$$
\lim_{p\to \infty} (\lambda_{1,p}(C,m))^{1/p} .
$$

\begin{proof}[Proof of Theorem \ref{teo.C.intro}]
Following the ideas of Theorem \ref{teo1.intro} we search for an upper bound
for $\limsup_{p}\lambda_{1,p}^{1/p}$. To this end, let $c\in\Omega_{+}$. 
Associated with this $c\in\Omega_{+}$, let $R = R(c)>0$ be the radius of
the biggest ball $B_{R}(c)$ such that $B=B_{R}
(c)\subset\Omega$. Once again we consider the function
\[
w:=\left\{
\begin{array}
[c]{ll}
d(x,\partial B_{R}(c)) & \text{if }x\in B_{R}(c),\\
0 & \text{if }x\not \in B_{R}(c).
\end{array}
\right.
\]
Using the definition of $\lambda_{1,p}$ we see that
\[
\lambda_{1,p}^{1/p}(C,m)\leq\frac{ \displaystyle\left(  |B|+\int_{B_{R}} C|w|^{p}\right)
^{1/p}}{ \displaystyle \left(  \int_{\Omega}m|w|^{p}\right)  ^{1/p}}.
\]
We already know that $$\displaystyle{\lim_{p\to\infty}\left(  \int_{\Omega
}m|w|^{p}\right)  ^{1/p}=R}.$$ On the other hand, as $C$ is positive, we obtain
$$
\left( \int_{B} C|w|^{p} \right)^{1/p}= \left( \int_{B}
C|\left(  R-d(x,c)\right)^{+}|^{p} \right)^{1/p}
\to R,
$$
as $p\to \infty$.
Therefore, letting $p$ to infinity, we obtain
\begin{equation}
\label{eigen.C.lim}\limsup_{p\to\infty}( \lambda_{1,p}(C,m))^{1/p}\leq
\max\left\{  \frac{1}{R}, 1\right\}  .
\end{equation}
We have that if $R\leq1$ then $1/R\geq1$ so that
the maximum is achieved for $1/R$. On the other hand, if
$R>1$ then the maximum is $1$.

Now, taking into account that $c\in\Omega_{+}$ we obtain 
\begin{equation}
\label{eigen.C.lim2}\limsup_{p\to\infty} (\lambda_{1,p}(C,m))^{1/p}\leq
\inf_{c\in\Omega_{+}}
\max\left\{  \frac{1}{R(c)}, 1\right\}  = \max\left\{  \frac{1}{R_+}, 1\right\},
\end{equation}
where, as before, $R_{+}:=\max
_{x\in\Omega_{+}}d(x,\partial\Omega)$.

From this bound we can argue as before to obtain that $\{u_{p}\}$ is a bounded sequence in $W_{0}^{1,q}(\Omega)$ and then
there is a subsequence (that we still call $u_{p}$) that converges weakly in
$W_{0}^{1,q}(\Omega)$ and uniformly in $\overline{\Omega}$ to a limit
$u_{\infty}$. Moreover, it holds that
\[
\left\Vert {\nabla u_{\infty}}\right\Vert _{\infty}\leq \limsup_{p\to\infty} (\lambda_{1,p}(C,m))^{1/p} \leq 
\max\left\{ \frac{1}{R_{+}},1 \right\}.
\]
Hence $u_{\infty}$ belongs to $W_{0}^{1,\infty}(\Omega)$. Moreover,
\[
\displaystyle1=\left(  \int_{\Omega}m|u_{p}|^{p}\right)  ^{1/p}\leq\left(
\int_{\Omega}m^{+}|u_{p}|^{p}\right)  ^{1/p}\rightarrow\Vert u_{\infty}%
\Vert_{L^{\infty}(\Omega_{+})}\qquad\mbox{ as }p\rightarrow\infty.
\]
Therefore, $\Vert u_{\infty}\Vert_{L^{\infty}(\Omega_{+})}\geq1$. Next we
notice that (since $u_{\infty}$ is Lipschitz continuous in $\overline{\Omega}%
$) there exists $x_{0}\in\Omega_{+}$ with
\[
u_{\infty}(x_{0})=\Vert u_{\infty}\Vert_{L^{\infty}(\Omega_{+})}\geq1.
\]
Now we observe that, if we take $y\in\partial\Omega$ such that $|x_{0}-y|=$
dist$(x_{0},\partial\Omega)$, we have
\[
1\leq u_{\infty}(x_{0})-u_{\infty}(y)\leq\left\Vert {\nabla
u_{\infty}}\right\Vert _{\infty}|x_{0}-y|\leq \max\left\{ \frac{1}{R_{+}},1 \right\} |x_{0}-y|
\leq \max\left\{ 1,R_{+} \right\}.
\]
Hence, if $R_{+}:=\max_{x\in\Omega_{+}}d(x,\partial\Omega)\leq 1$ we get that all
the previous inequalities must be equalities and so
\[
u_{\infty}(x_{0})=1,\qquad\left\Vert {\nabla u_{\infty}}\right\Vert _{\infty
}=\frac{1}{R_{+}},\qquad\mbox{and}\qquad d(x_{0},\partial\Omega)=R_{+}.
\]
Notice that this implies that $u_{\infty}$ is a minimizer for the limit
variational problem, that is,
\[
\frac{ \max\{ \left\Vert \nabla u_{\infty}\right\Vert _{L^{\infty}(\Omega)};
\left\Vert u_{\infty}\right\Vert _{L^{\infty}(\Omega)}\} }
{\left\Vert u_{\infty}\right\Vert _{L^{\infty}(\Omega^{+})}}=\inf_{v\in
W_{0}^{1,\infty}(\Omega)} \frac{\max\{ \left\Vert \nabla v \right\Vert _{L^{\infty}(\Omega)};
\left\Vert v \right\Vert _{L^{\infty}(\Omega)}\} }{\left\Vert v\right\Vert _{L^{\infty}(\Omega^{+})}} = \frac{1}{R_{+}}.
\]
Moreover, we have
$$
\lim_{p\to \infty}  (\lambda_{1,p}(C,m))^{1/p} =  \frac{1}{R_{+}} = \max\left\{ \frac{1}{R_{+}},1 \right\}.
$$

On the other hand, if $R_{+}:=\max_{x\in\Omega_{+}}d(x,\partial\Omega)\geq 1$  we have
$$
\liminf_{p\to \infty} \left(\int_{\Omega}C|u_p|^{p} \right)^{1/p} \geq u_\infty (x_0) = 1.
$$
Therefore,
$$
\begin{array}{l}
\displaystyle 
\liminf_{p\to \infty}  (\lambda_{1,p}(C,m))^{1/p}= \liminf_{p\to \infty}  \left(\min_{\int_{\Omega}m|u|^{p}=1}\int_{\Omega}|\nabla
u|^{p}+C|u|^{p} \right)^{1/p}  \\
\qquad \displaystyle \geq  \liminf_{p\to \infty}  \left(\int_{\Omega}C|u_p|^{p} \right)^{1/p} \geq 1.
\end{array}
$$
We conclude that also in this case
$$
\lim_{p\to \infty}  (\lambda_{1,p}(C,m))^{1/p} =1 = \max\left\{ \frac{1}{R_{+}},1 \right\}.
$$
This ends the proof.
\end{proof}

\end{document}